\newtheorem{thm}{Theorem}[section]
\newtheorem{alg}[thm]{Algorithm}
\newtheorem{lem}[thm]{Lemma}
\newtheorem{prop}[thm]{Proposition}
\theoremstyle{remark}
 \def\CA{{\mathcal A}}
 \def\CD{{\mathcal D}}
 \def\CO{{\mathcal O}}
 \def\CR{{\mathcal R}}
 \def\CV{{\mathcal V}}
 \def\RR{{\mathbb R}}
        \def\proj{\operatorname{proj}}
        \def\rank{\operatorname{rank}}
\newif\ifpdf
\begin{document}

\title{OPED Reconstruction Algorithm for Limited Angle Problem }
\author{Yuan Xu}
\address{Department of Mathematics University of Oregon
    Eugene, Oregon 97403-1222.}
    \email{yuan@math.uoregon.edu}
\author{Oleg Tischenko}
\address{Institute of Radiation Protection\\
Helmholtz Zentrum M\"unchen GmbH\\
German Research Center for Environmental Health\\
D-85764 Neuherberg, Germany}
\email{oleg.tischenko@helmholtz-muenchen.de}

\thanks{The first author was supported by NSF Grant DMS-0604056}

\date{\today}
\keywords{Reconstruction of images, algorithms, limited angle problem}
\subjclass{ 42B08, 44A12, 65R32}

\begin{abstract}
The structure of the reconstruction algorithm OPED permits a natural 
way to generate additional data, while still preserving the essential 
feature of the algorithm. This provides a method for image reconstruction 
for limited angel problems. In stead  of completing the set of data, the 
set of discrete sine transforms of the data is completed. This is achieved 
by solving systems of linear equations that have, upon choosing 
appropriate parameters, positive definite coefficient matrices. 
Numerical examples are presented. 
\end{abstract}

\maketitle

\section{Introduction}
\setcounter{equation}{0}

Image reconstruction from x-ray data is the central problem of computed
tomography (CT). An x-ray data is described by a line integral, called Radon
transform, of the function that represents the image. A Radon transform of
a function $f$ is denoted $\CR f(\theta, t)$ where $\theta$ and $t$ are 
parameters in the line equation $\cos \theta x +  \sin \theta y=t$. The image 
reconstruction means to recover the function from a set of line integrals 
by an approximation procedure, the reconstruction algorithm. For further 
background we refer to \cite{Hs, KS, N}. The quality 
of the reconstruction depends on how much x-ray data is available and 
the data geometry, meaning the distribution of the available x-ray lines, as 
well as on the algorithm being used. The ideal case is when the available
data are exactly what the reconstruction algorithm need. Most of the 
algorithms, for example the FBP (filtered backprojection) algorithm, requires
a full set of data that are well distributed in directions along a full circle of
views. In many practical cases, however, x-rays in some of the directions 
could be missing. We then face the problem of reconstructing an image 
from a set of incomplete data, which is, however,  intrinsically ill-posed. In 
order to apply an algorithm that requires a full set of data on the problem 
of incomplete data, one needs to derive approximations of the missing 
data from the available data, for example, by some type of interpolation 
process, which, however, has to be done carefully as the incomplete data
is usually severely ill-posed. 

In the present paper we consider the limited angle problem, a type of 
incomplete data problem for which the radon data $\CR f(\theta, t)$ are 
given for $\theta$ in a subset of a half circle, and show that the reconstruction 
algorithm OPED (based on Orthogonal Polynomial Expansion on the Disk), 
studied recently in \cite{X06,XO,XTC}, permits a natural 
approximation for the missing data. The limited angle problem was studied 
extensively in \cite{D,L1,L2,L3,LR,Q}, see also \cite{N}. The problem is 
known to be highly ill-posed ([2]). The approach in \cite{L1,L2,L3,LR} uses 
the singular value  decomposition to generate the missing data, then 
uses FBP to reconstruct the image. 

In our approach, we do not actually generate the missing Radon data per 
se, but what is missing for the OPED algorithm, which are the discrete sine
transforms of the missing data. This algorithm for two dimensional
images is based on orthogonal expansion on the disk; in fact, it is a 
discretization of the $N$-th partial sum of the Fourier expansion in orthogonal 
polynomials on the disk.  One of the essential features of the algorithm is its 
preservation of polynomials of high degree. In other words, if the function 
that represents an image happens to be a polynomial of degree no more
than $N$, then the algorithm reproduces the image exactly. For smooth 
functions, this ensures that OPED algorithm has a high order of convergence. 
In fact it is proved in \cite{X06} that it converges uniformly on the unit disk 
for functions that has second order continuous derivatives. Furthermore, 
numerical tests have shown that the algorithm reconstructs images accurately
with high resolution for both phantom data and real data. Our main result 
in Section 3 shows that we can make use of the structure of the approximating
function in OPED algorithm to generate what is missing for the algorithm, 
while still maintaining the feature of  polynomial preserving,  so that the 
algorithm can be used for the limited angle problem. The method completes
the set of discrete sine transforms of the data by solving systems linear equations.
We show how to choose parameters so that these matrices are positive 
definite. The ill-posedness of the limited angle problem is reflected in the 
ill-conditioning of the matrices. We discuss the dependence of the condition
numbers on the parameters that appear in the algorithm, which serves as 
a guidance for the numerical experiments. 

The  paper is organized as follows. The follows section contains the background
on OPED algorithm. In Section 3, we derive the algorithm for limited angle 
problem, provide a theoretic background, discuss conditions for the 
matrices to be positive definite, and study the conditional numbers of the 
matrices. The numerical results are reported and discussed in Section 4. 
A shot conclusion finishes the paper in Section 5.

\section{Background and OPED algorithm} 
\setcounter{equation}{0}

\subsection{Background} Let $f(x,y)$ be a function defined on the unit
disk $B = \{(x,y): x^2+y^2 \le 1\}$. A Radon transform of $f$ is a line 
integral,
$$
\CR f(\theta, t) :=  \int_{I(\theta,t)} f(x,y) dx dy,
\qquad 0 \le \theta \le 2\pi, \quad -1\le t \le 1,
$$
where $I(\theta,t) = \{(x,y): x \cos \theta + y \sin \theta = t\} \cap B$
is a line segment inside $B$. The central problem in CT is to recover
the function $f(x,y)$, which represents an image, from its Radon 
transforms, which represent x-rays in mathematical terms. In reality, only 
a finite collection of x-ray data is available for reconstruction, which can 
be used to construct, in general, an approximation of $f$. An algorithm 
is a specific approximation process to $f$ based on the finite collection
of data. There are many ways to construct the approximation process.
The FBP algorithm is based on an interaction between Fourier and
Radon transforms. OPED algorithm is based on orthogonal expansion 
on the disk. 

Let $\Pi_n^2$ denote the space of polynomials of total degree at most 
$n$ in two variables. Let $\CV_n(B)$ denote the space of orthogonal 
polynomials of degree $n$ on $B$ with respect to the Lebesgue measure. 
A function in $L^2(B)$ can be expanded in terms of orthogonal 
polynomials, that is, 
\begin{equation} \label{OPexpan}
 f(x) = \sum_{k=0}^\infty \proj_k f(x), \qquad 
        \proj_k: L^2(B) \mapsto \CV_n(B). 
\end{equation} 
It turns out that the projection operator $\proj_k f$ has a natural connection 
to the Radon transforms. In fact, the following expression holds (\cite{X06}, 
see also \cite{LogShep, P, BG}), 
\begin{equation} \label{proj}
   \proj_{k} f(x,y) =  \frac{1}{N} \sum_{\nu =0}^{N-1} 
        \frac{1}{\pi} \int_{-1}^1 \CR f(\phi_\nu, t)  U_k(t) dt (k+1)
               U_k(x\cos\phi_\nu+ y\sin\phi_\nu),
\end{equation}
where $\phi_\nu = \frac{2 \pi \nu}{N}$ and $U_k(t)$ denotes the Chebyshev 
polynomial of the second kind, 
\begin{equation} \label{Cheby} 
U_k(t) = \frac{\sin(k+1) \theta}{\sin \theta}, \qquad t = \cos \theta.
\end{equation}
The formula \eqref{proj} allows us to construct a number of approximation
processes based on the Radon data. Here are two that are of particular 
interests to us,  
\begin{equation} \label{SN} 
  S_N f (x): = \sum_{k=0}^{N-1} \proj_k f(x,y) \quad\hbox{and} \quad
    S_N^\eta f (x): = \sum_{k=0}^{N-1} \eta (\tfrac{k}{N})\proj_k f(x,y),
\end{equation}
where $\eta$ is a smooth function in $C^3[0, \infty)$ such that $\eta(t) =1$ 
for $t\in [0,\tau]$, where $\tau$ is fixed with $0 < \tau < 1$, $\eta(t) =0$ 
for $t \ge 1$, and $\eta(t)$ is strictly decreasing on $[\tau,1]$. 
The function $S_Nf$ is the best approximation to $f$ from $\Pi_N^2$ in
 $L^2(B)$ and it is a projection operator on $\Pi_N^2$, that is, $S_N f = f$
if $f \in \Pi_N^2$, while the function $S_N^\eta f$ approximates $f$ in uniform
norm with the error of approximation in proportion to the best uniform 
approximation by polynomials of degree $\lfloor \tau N\rfloor$ and it satisfies
$S_N^\eta f = f$ if $f \in \Pi_{\lfloor \tau N\rfloor}^2$ (see \cite{X05}). We 
can discretize 
$S_Nf $ or $S_N^\eta f$, by applying a quadrature formula on the integral 
over $t$ in \eqref{proj}, to get an approximation to $f$ based on discrete 
Radon data, which is the essence of the OPED algorithm.  If we choose 
Gaussian quadrature with respect to the Chebyshev weight, then the 
discretized approximation functions, denoted by $A_N f$ or $A_N^\eta f$,
respectively, also preserve polynomials of appropriate degrees. 

To be more precise, we work with the following explicit OPED algorithm. 

\begin{alg} \label{alg:OPED}
{\bf OPED Algorithm}. {\it Let $N_d$ and $N$ be two positive integers
and $N_d \le N$.  Evaluate at each reconstruction points, 
\begin{equation} \label{AN}
\CA_N(x,y) = \frac{1}{N} \sum_{k=0}^{N_d-1} \sum_{\nu=0}^{N-1} 
       \eta\left(\frac{k}{N_d}\right)  \lambda_{k,\nu}  (k+1) 
          U_k(x \cos \phi_\nu + y \sin \phi_\nu)
\end{equation}
where $\phi_\nu = \frac{2 \nu \pi}{N}$, 
\begin{equation}\label{lambda}
 \lambda_{k,\nu} =  \frac{1}{N_d} \sum_{j=0}^{N_d-1} 
                   \sin (k+1) \psi_j \CR({\phi_\nu}, \cos \psi_j), \qquad
      \psi_j = \frac{(2j+1) \pi}{2 N_d}, 
\end{equation} 
and $\eta(t)$ is a smooth function such that $\eta(t) =1$ on $[0,\tau]$ for
a fixed $\tau$, $0 < \tau <1$,  and $\eta(t) \ge 0$ for $t \ge \tau$. }
\end{alg}

The image is reconstructed by the values of $\CA_N(x,y)$ over a grid 
of reconstruction points. The function $\CA_N(x,y)$ is a polynomial of 
degree $N_d$. As an operator,  it preserves polynomials of degree 
$\lfloor \tau N_d \rfloor$, that is, 
$$
 \CA_N f \equiv f \qquad \hbox{for all $f \in \Pi_{\lfloor \tau N_d \rfloor}^2$}.
$$
Naturally $N_d$ and $N$ could be the same. For 
image reconstruction, we often take $N$ and $N_d$ as large as 1000, 
meaning that $\CA_N f$ preserves polynomials of high degrees. The 
reconstruction has high quality, as supported by both theoretic study in 
\cite{X06} and by numerical experiments in \cite{HOXH, XO, XTC}. 
A fast implementation of the algorithm is discussed in  \cite{XO}, which 
shows that we need $\CO(N^3)$ evaluations for reconstructing an
 image on a $M\times M$ grid, if $N_d \approx M \approx N$. 

\subsection{OPED algorithm with odd number of views}
An x-ray enters an  area in the angle $\phi$ is the same as the x-ray that 
exits with the angle $\pi +\phi$. For Radon transform, this is stated as 
\begin{equation} \label{eq:RadonEven}
  \CR(\phi+\pi, t) = \CR(\phi, -t), \qquad  0 \le \phi \le 2 \pi. 
\end{equation}
As a result, we have been using the OPED algorithm with $N$ being an
odd integer to avoid the repetition. For $N$ being odd, we can rewrite
the formula of OPED algorithm so that the views are restricted to $[0,\pi]$
instead of $[0,2\pi]$. We state this as a proposition.

\begin{prop} 
Let $N$ be an odd integer. Then we can replace $\phi_\nu = 2 \pi \nu /N$ in
\eqref{AN} and \eqref{lambda} by $\gamma_\nu = \pi \nu /N$. 
\end{prop}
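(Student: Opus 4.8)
The plan is to show that, for each fixed $k$, the inner sum over $\nu$ in \eqref{AN} is unchanged when $\phi_\nu = 2\pi\nu/N$ is replaced by $\gamma_\nu = \pi\nu/N$; since the outer $k$-sum and the weights $\eta(k/N_d)$ are left untouched, this yields the proposition. Write $g_k(\phi)$ for the summand viewed as a function of the view angle, namely $g_k(\phi) = \lambda_k(\phi)\,(k+1)\,U_k(x\cos\phi + y\sin\phi)$, where $\lambda_k(\phi) = \frac{1}{N_d}\sum_{j=0}^{N_d-1}\sin((k+1)\psi_j)\,\CR(\phi,\cos\psi_j)$ is the quantity \eqref{lambda} with $\phi_\nu$ replaced by a general angle $\phi$. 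The two facts I would establish are: (i) $g_k$ is $\pi$-periodic, and (ii) reducing the angles $\phi_\nu$ modulo $\pi$ merely permutes the desired nodes $\gamma_\nu$, precisely because $N$ is odd.

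For (i), I would first determine the effect of $\phi \mapsto \phi+\pi$ on $\lambda_k$. Using the even-data relation \eqref{eq:RadonEven} in the form $\CR(\phi+\pi,\cos\psi_j) = \CR(\phi,-\cos\psi_j) = \CR(\phi,\cos(\pi-\psi_j))$, together with the symmetry of the Gaussian nodes about $\pi/2$, namely $\pi - \psi_j = \psi_{N_d-1-j}$, I can reindex the sum by $j \mapsto N_d-1-j$. The sine weight transforms as $\sin((k+1)(\pi-\psi_j)) = (-1)^k \sin((k+1)\psi_j)$, and this reindexing produces $\lambda_k(\phi+\pi) = (-1)^k\lambda_k(\phi)$. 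On the polynomial side, the parity $U_k(-t) = (-1)^k U_k(t)$ of the Chebyshev polynomial of the second kind gives $U_k(x\cos(\phi+\pi)+y\sin(\phi+\pi)) = (-1)^k U_k(x\cos\phi+y\sin\phi)$. Multiplying the two, the factors $(-1)^k$ cancel and $g_k(\phi+\pi) = g_k(\phi)$.

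For (ii), I would invoke the $\pi$-periodicity to replace each argument $\phi_\nu = 2\pi\nu/N$ by its residue modulo $\pi$. Since $2\pi\nu/N = \pi(2\nu)/N$, reduction modulo $\pi$ replaces $2\nu$ by $2\nu \bmod N$, so that $g_k(\phi_\nu) = g_k(\gamma_{\,2\nu \bmod N})$. Because $N$ is odd we have $\gcd(2,N)=1$, whence $\nu \mapsto 2\nu \bmod N$ is a bijection of $\{0,\dots,N-1\}$. Summing $g_k(\phi_\nu)$ over $\nu$ therefore equals summing $g_k(\gamma_\mu)$ over $\mu$, which is exactly the inner sum with $\gamma_\nu$ in place of $\phi_\nu$.

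The one delicate point, and the main obstacle, is the sign bookkeeping in step (i): one must check that the $(-1)^k$ arising from the reflection $\psi_j \mapsto \pi-\psi_j$ of the sine weights cancels exactly against the $(-1)^k$ from the parity of $U_k$, so that $g_k$ is genuinely $\pi$-periodic and not merely anti-periodic. Everything else—the node symmetry $\pi-\psi_j=\psi_{N_d-1-j}$ and the coprimality $\gcd(2,N)=1$—is elementary once $N$ is assumed odd.
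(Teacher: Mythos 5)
Your proposal is correct and follows essentially the same route as the paper: your central sign computation---the $(-1)^k$ from reindexing $\psi_j \mapsto \psi_{N_d-1-j}$ via \eqref{eq:RadonEven} cancelling the $(-1)^k$ from the parity $U_k(-t)=(-1)^k U_k(t)$---is exactly the paper's proof that $\lambda_k(\phi+\pi)=(-1)^k\lambda_k(\phi)$ and hence $\Omega_k(\phi_{\nu+\frac{N+1}{2}})=\Omega_k(\gamma_{2\nu+1})$, which is precisely your $\pi$-periodicity of $g_k$. Your only departure is in the bookkeeping: where the paper splits the sum explicitly into the even-indexed nodes $\gamma_{2\nu}$ and odd-indexed nodes $\gamma_{2\nu+1}$, you package the same step as the bijection $\nu \mapsto 2\nu \bmod N$ on $\{0,\dots,N-1\}$, valid because $\gcd(2,N)=1$ for odd $N$.
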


\begin{proof}
Let us define
$$
  \lambda_k (\phi) =   \frac{1}{N_d} \sum_{j=0}^{N_d-1} 
                   \sin (k+1) \psi_j \CR({\phi}, \cos \psi_j).
$$
Then $\lambda_{k,\nu} = \lambda_k(\phi_\nu)$. 
Since $N$ is an odd integer, it follows readily that $\phi_\nu$ satisfies 
$\phi_{\nu+ (N+1)/2} = \pi + \gamma_{2\nu+1}$. We also have that
$\psi_j$ satisfies $\pi - \psi_j= \psi_{N_d-j-1}$. As a result, it follows 
from \eqref{eq:RadonEven} that 
$$
 \CR(\phi_{\nu+(N+1)/2},\cos \psi_j) =  \CR(\gamma_{2\nu+1},-\cos \psi_j) 
    = \CR(\gamma_{2\nu+1},\cos \psi_{N_d-j-1}). 
$$ 
Then, for $0 \le \nu \le (N-3)/2$, we obtain
\begin{align*}
   \lambda_k(\phi_{\nu+N/2}) & = \frac{1}{N_d} \sum_{j=0}^{N_d-1} 
     \sin (k+1) \psi_j  \CR(\gamma_{2\nu+1}, \cos \psi_{N_d-j-1}) \\
      & = \frac{1}{N_d} \sum_{j=0}^{N_d-1} \sin (k+1) \psi_{N_d-j-1}
                \CR(\gamma_{2\nu+1}, \cos \psi_j) \\
      & = (-1)^k \frac{1}{N_d} \sum_{j=0}^{N_d-1} \sin (k+1) \psi_{j}
                \CR(\gamma_{2\nu+1}, \cos \psi_j)  =  (-1)^k \lambda_{k}(\gamma_{2\nu+1}).
\end{align*}
Let $\Omega_k(\phi) : = \lambda_k(\phi)U_k(x \cos \phi + y \sin \phi)$. Using 
$\cos \phi_{\nu+\frac{N+1}2} = - \cos \gamma_{2\nu+1}$ and 
$\sin \phi_{\nu+\frac{N+1}2} = - \sin \gamma_{2\mu+1}$, as well as 
$U_k(-t) = (-1)^k U_k(t)$, it follows that
\begin{align*}
 \Omega_k(\phi_{\nu+\frac{N+1}2}) = 
   (-1)^k \lambda_k(\gamma_{2 \nu +1}) U_k(- x \cos \gamma_{2 \nu +1}
     - y \sin \gamma_{2 \nu +1}) = \Omega_k (\gamma_{2\nu+1}). 
\end{align*}
Consequently, we obtain 
\begin{align*}
&  \sum_{\nu =0}^{N-1} \lambda_{k,\nu} U_k(x \cos \phi_\nu + y \sin \phi_\nu) 
    = \sum_{\nu =0}^{N-1} \Omega_k(\phi_\nu) \\
& \qquad  = \sum_{\nu =0}^{\frac{N-1}2} \Omega_k(\gamma_{2\nu})
   + \sum_{\nu =0}^{\frac{N-3}2} \Omega_k(\gamma_{2\nu+1})  
     =   \sum_{\nu =0}^{N-1} \Omega_k(\gamma_\mu),
\end{align*} 
from which the proof of the stated result follows immediately. 
\end{proof}

\subsection{OPED algorithm with even number of views}
If $N$ is even, the relation \eqref{eq:RadonEven} shows that some of the
rays coincide, so that the formulas in the OPED algorithm can be 
simplified somewhat. We summarize the essential part in the following
proposition. 

\begin{prop}
Let $N$ be an even integer. Then $\lambda_{k,\nu}$ defined in
\eqref{lambda} satisfy
\begin{align} \label{lambdaEven}
   \lambda_{k,\nu+N/2} =  (-1)^k \lambda_{k,\nu}, \qquad 0 \le \nu \le  N/2 -1
\end{align}
and, furthermore, 
\begin{align} \label{sumEven}
\sum_{\nu =0}^{N-1} \lambda_{k,\nu} U_k(x \cos \phi_\nu + y \sin \phi_\nu) = 
 2 \sum_{\nu =0}^{N/2-1} \lambda_{k,\nu} U_k(x \cos \phi_\nu + y \sin \phi_\nu).
\end{align}
\end{prop}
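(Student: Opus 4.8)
The plan is to proceed exactly in parallel with the odd-$N$ proposition, exploiting that the step $\nu \mapsto \nu + N/2$ now produces an honest angular shift by $\pi$. The single governing observation is that, since $N$ is even, $N/2$ is an integer and
\[
 \phi_{\nu+N/2} = \frac{2\pi(\nu+N/2)}{N} = \phi_\nu + \pi .
\]
From this everything follows by combining the Radon symmetry \eqref{eq:RadonEven}, the reflection symmetry of the quadrature nodes $\psi_j$, and the parity $U_k(-t) = (-1)^k U_k(t)$. I would first establish the coefficient identity \eqref{lambdaEven} and then feed it into the summation identity \eqref{sumEven}.

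For \eqref{lambdaEven}, I would start from the definition \eqref{lambda} with $\phi_{\nu+N/2} = \phi_\nu + \pi$ and apply \eqref{eq:RadonEven} to rewrite $\CR(\phi_\nu+\pi,\cos\psi_j) = \CR(\phi_\nu,-\cos\psi_j)$. The key bookkeeping is the node reflection $\pi - \psi_j = \psi_{N_d - j - 1}$, which gives $-\cos\psi_j = \cos\psi_{N_d-j-1}$, so after the substitution $j \mapsto N_d - j - 1$ the Radon argument is put back into standard form. The remaining factor to track is the sine weight evaluated at the reflected node, for which I would use
\[
 \sin\bigl((k+1)(\pi-\psi_j)\bigr) = (-1)^{k}\sin\bigl((k+1)\psi_j\bigr),
\]
since $\sin((k+1)\pi)=0$ and $\cos((k+1)\pi)=(-1)^{k+1}$. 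Collecting the single factor $(-1)^k$ out of the sum reproduces $\lambda_{k,\nu}$ and yields $\lambda_{k,\nu+N/2} = (-1)^k\lambda_{k,\nu}$. This sine-sign computation is the most delicate point, and it is precisely the analogue of the $(-1)^k$ that appears in the odd-$N$ argument.

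For \eqref{sumEven}, I would split the sum over $\nu \in \{0,\dots,N-1\}$ into the blocks $\{0,\dots,N/2-1\}$ and $\{N/2,\dots,N-1\}$, reindexing the second block as $\nu = \mu + N/2$ with $0 \le \mu \le N/2-1$. On this block, $\phi_{\mu+N/2} = \phi_\mu + \pi$ forces $\cos\phi_{\mu+N/2} = -\cos\phi_\mu$ and $\sin\phi_{\mu+N/2} = -\sin\phi_\mu$, so the argument of $U_k$ is negated; by parity this contributes a factor $(-1)^k$, while \eqref{lambdaEven} contributes another factor $(-1)^k$. The two signs multiply to $(-1)^{2k}=1$, so each term of the second block equals the corresponding term of the first block, and the two blocks sum to twice the half-range sum. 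No genuine obstacle arises; the proof is essentially a sign-tracking exercise, and the only place demanding care is ensuring the node-reflection substitution in the first part is carried out consistently with the sine-weight sign.
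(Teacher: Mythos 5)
Your proposal is correct and follows essentially the same route as the paper, which also uses $\phi_{\nu+N/2}=\pi+\phi_\nu$ together with the Radon symmetry \eqref{eq:RadonEven} and the node reflection $\pi-\psi_j=\psi_{N_d-j-1}$, then appeals to the odd-$N$ argument for the $(-1)^k$ sign from the sine weight and for the cancellation $\Omega_k(\phi_{\nu+N/2})=\Omega_k(\phi_\nu)$ yielding the factor $2$. Your write-up merely makes explicit the sine-sign computation $\sin\bigl((k+1)(\pi-\psi_j)\bigr)=(-1)^k\sin\bigl((k+1)\psi_j\bigr)$ that the paper delegates to the previous proposition's proof.
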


\begin{proof} 
Since $N$ is an even integer,  $\phi_\nu$ satisfies $\phi_{\nu+ N/2} = \pi + \phi_\nu$. 
We still have $\pi - \psi_j= \psi_{N_d-j-1}$. As a result, it follows 
from \eqref{eq:RadonEven} that 
$$
 \CR(\phi_{\nu+N/2},\cos \psi_j) =  \CR(\phi_{\nu},-\cos \psi_j) 
    = \CR(\phi_\nu,\cos \psi_{N_d-j-1}). 
$$ 
Following the same line of the proof in the previous proposition, the above 
relation leads to \eqref{lambdaEven} and \eqref{sumEven}
Following the same line of the proof in the previous proposition, the above 
relation leads to \eqref{lambdaEven}. Similarly, we have in this case
$\Omega_k(\phi_{\nu +\frac{N}2}) = \Omega_k(\phi_\nu)$, from which 
\eqref{sumEven} follows.
\end{proof}

As a  of consequence of this proposition, the algorithm for even $N$ becomes: 

\begin{alg} \label{alg:OPEDeven}
{\bf (OPED Algorithm for even $N$)}. {\it Let $N$ be an even integer. 
Evaluate at each reconstruction points, 
\begin{equation} \label{ANeven}
\CA_N(x,y) = \frac{2}{N} \sum_{k=0}^{N_d-1}  \sum_{\nu=0}^{N/2-1} 
   \eta\left(\frac{ k}{N_d}\right)  \lambda_{k,\nu}  (k+1) 
        U_k(x \cos \phi_\nu + y \sin \phi_\nu),
\end{equation} }
where $\phi_\nu = \frac{2\pi \nu}{N}$  and $\lambda_{k,\nu}$ are 
given in \eqref{lambda}. 
\end{alg}

\medskip
In other words, we have \eqref{AN} replaced by \eqref{ANeven}. Notice that 
the view angles in \eqref{ANeven} are equally distributed over an half circle; 
that is, $\phi_\nu$ in \eqref{ANeven} are in $[0,\pi]$. When we
work with the limited angle problem, we will further assume that $N_d = 
N /2$ in \eqref{ANeven}; see Section 4.

For $N$ being even, a full data set for the OPED algorithm is then
\begin{equation}\label{DN}
 \CD_N:= \left \{ g_{\nu,j}: = \CR (\phi_\nu, \cos \psi_j):
     0 \le \nu \le N/2-1, 0\le j \le N-1 \right\}, 
\end{equation}
with angle $\phi_\nu$ distributed equally over a half circle (an arc of $180^\circ$). 

\section{Derivation of OPED algorithm for limited angle problem} 
\setcounter{equation}{0}
 
In the limited angle problem, the data available consists of $g_{\nu,j}$ with 
$\phi_\nu$ distributed over an arc of less than $180^\circ$. We are particularly
interested in the case that $N$ is even and the data is given by 
\begin{equation}\label{D_rN}
 \CD_{r,N}:= \left \{ g_{\nu,j} :  r \le \nu \le N/2-1, \, 0\le j \le N -1 \right\},
\end{equation}
where $r$ is a positive integer and $r < N/2-1$. In other words, the Radon 
projections correspond to the angles $\phi_{\nu_0}, \ldots, \phi_{\nu_{r-1}}$ 
are missing from the data set $\CD_N$. In this section we show how the 
structure of $A_N f$ can be explored to deal with such a problem.

\subsection{Description of the idea} 
From the given data, we can compute (via FFT)  every element in the set 
\begin{equation}\label{Lambda_rEven}
 \Lambda_{r,N} := \left  \{ \lambda_{k,\nu}: \,  r \le \nu \le N/2-1, \, 
         0 \le k \le N_d-1 \right \}. 
\end{equation}
To apply OPED algorithm, the missing data $\lambda_{k,\nu}$  
for $0\le k \le N_d-1$ and $0 \le \nu \le r-1$ are needed. We now describe
our approach to complete the data set. 

Note that the evaluation of $\CA_N (x,y)$ in \eqref{ANeven} can be carried 
out so long as we know all $\lambda_{k,\nu}$ for $0 \le \nu \le N /2-1$ and 
$0 \le k \le N_d-1$. The equation \eqref{ANeven} is derived from \eqref{AN}
when $N$ is even. For more generality, we work in the following with
\eqref{AN} in which $N$ can be either even or odd, and accordingly with 
the available $\lambda_{k,\nu}$ given by 
\begin{equation}\label{Lambda_r}
 \Lambda_{r,N} := \left  \{ \lambda_{k,\nu}: \,  r \le \nu \le N-1, \, 
         0 \le k \le N_d-1 \right \}. 
\end{equation}


We will need a lemma on the Radon transform of orthogonal polynomials.

\begin{lem} \label{lem1}
\cite{M}
If $P$ is an orthogonal polynomial in $\CV_k(B)$, then  for each 
$t \in (-1,1) $ and $0 \le \theta \le 2 \pi$, 
$$
  \CR P(\theta,t) = \frac{2}{k+1} \sqrt{1-t^2} U_k(t) P(\cos \theta, \sin \theta).
$$
\end{lem}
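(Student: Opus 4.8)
The plan is to prove this by relating the Radon transform of an orthogonal polynomial $P \in \CV_k(B)$ to its orthogonal-polynomial structure, exploiting the fact that $\CV_k(B)$ carries a natural group action under rotations. First I would fix the direction $\theta$ and consider the function $t \mapsto \CR P(\theta, t)$, which integrates $P$ over the chord $I(\theta, t)$; the length of that chord is $2\sqrt{1-t^2}$, which already accounts for the geometric factor $\sqrt{1-t^2}$ appearing on the right-hand side. The essential claim is then that, after dividing out this chord length, the remaining $t$-dependence is precisely the Chebyshev polynomial $U_k(t)$, and the angular dependence is exactly the value $P(\cos\theta, \sin\theta)$.

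The cleanest route I would take is via the reproducing/orthogonality properties of $\CV_k(B)$ rather than brute-force integration. Concretely, I would expand $\CR P(\theta, t)$ in the Chebyshev polynomials $U_m(t)$ (with weight $\sqrt{1-t^2}$) and compute the coefficients
\[
  c_m(\theta) = \frac{2}{\pi} \int_{-1}^1 \CR P(\theta, t)\, U_m(t)\, \sqrt{1-t^2}\, dt.
\]
Changing variables so that the $t$-integral becomes an integral of $P$ over all of $B$ against the ridge function $U_m(x\cos\theta + y\sin\theta)$, one recognizes that $U_m(x\cos\theta + y\sin\theta)$, multiplied by a suitable factor, is itself a polynomial of degree $m$ in $(x,y)$. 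By orthogonality of $P \in \CV_k(B)$ against all polynomials of degree $< k$, every coefficient $c_m$ with $m < k$ vanishes, and for $m > k$ the ridge polynomial has degree $m > k$ but its projection onto $\CV_k$ is what survives; the key identity is that the degree-$k$ ridge function $U_k(x\cos\theta+y\sin\theta)$ reproduces the monomial structure of $\CV_k$. This forces $\CR P(\theta,t) = c_k(\theta)\, \sqrt{1-t^2}\, U_k(t)$ for a single coefficient, and the constant $\tfrac{2}{k+1}$ together with the angular factor $P(\cos\theta,\sin\theta)$ emerges from normalizing that coefficient.

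To pin down $c_k(\theta) = \tfrac{2}{k+1} P(\cos\theta, \sin\theta)$ I would test against the generating orthogonal polynomials of $\CV_k(B)$, which in the Chebyshev-of-the-second-kind ridge-polynomial realization are exactly $U_k(x\cos\phi + y\sin\phi)$ for appropriate angles $\phi$; evaluating the integral $\int_{-1}^1 U_k(t)^2 \sqrt{1-t^2}\, dt = \tfrac{\pi}{2}$ produces the normalization constant, and the rotation-covariance of the Radon transform, $\CR[P\circ R_\alpha](\theta,t) = \CR P(\theta - \alpha, t)$, transfers the angular dependence cleanly onto $P(\cos\theta,\sin\theta)$. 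The main obstacle I anticipate is the bookkeeping in the second paragraph: verifying that the ridge function $U_k(x\cos\theta+y\sin\theta)$ genuinely spans (a realization of) $\CV_k(B)$ as $\theta$ varies, so that matching a single Chebyshev coefficient actually determines $P$ and not merely its projection. Since this is a known result attributed to \cite{M}, I would lean on that identification of $\CV_k(B)$ with the ridge polynomials; absent it, the cleanest self-contained argument is to verify the identity on the spanning set $\{U_k(x\cos\phi+y\sin\phi)\}$ directly and invoke linearity.
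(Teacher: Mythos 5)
A preliminary remark: the paper contains no proof of this lemma --- it is quoted from Marr \cite{M} as a known result --- so your sketch can only be judged on its own merits. Its skeleton is indeed the standard (essentially Marr's) argument: pair $\CR P(\theta,\cdot)$ against Chebyshev polynomials, kill all coefficients but one using orthogonality on the disk, then normalize. But as written, the two load-bearing steps are respectively broken and missing. First, your coefficient formula carries a spurious weight. The identity that converts the $t$-integral into a disk integral is
$$
  \int_{-1}^1 \CR P(\theta,t)\, q(t)\, dt
   = \int_B P(x,y)\, q(x\cos\theta + y\sin\theta)\, dx\, dy,
$$
valid for the \emph{unweighted} pairing. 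With your
$c_m(\theta) = \frac{2}{\pi}\int_{-1}^1 \CR P(\theta,t)\,U_m(t)\sqrt{1-t^2}\,dt$, the induced test function on the disk is $U_m(\xi)\sqrt{1-\xi^2}$ with $\xi = x\cos\theta + y\sin\theta$, which is not a polynomial, so orthogonality of $P$ against lower-degree polynomials yields nothing and the claim ``every $c_m$ with $m<k$ vanishes'' fails as stated. The standard fix is to first show $\CR P(\theta,t) = \sqrt{1-t^2}\,Q_\theta(t)$ with $Q_\theta$ a polynomial of degree at most $k$ (parametrize the chord by arclength $s$; odd powers of $s$ integrate to zero and $s^{2j}$ contributes a constant times $(1-t^2)^j\sqrt{1-t^2}$) --- a fact the paper itself invokes later --- and then expand $Q_\theta = \sum_{m\le k} c_m U_m$ with $c_m = \frac{2}{\pi}\int_{-1}^1 \CR P(\theta,t)\,U_m(t)\,dt$, the weight having cancelled. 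Then $m<k$ dies by orthogonality of $P$, while for $m>k$ the clean reason (which your ``projection onto $\CV_k$'' phrasing only gestures at) is that $U_m(x\cos\theta+y\sin\theta)$ lies in $\CV_m(B)$, which is orthogonal to $\CV_k(B)$.

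Second, and more seriously, the constant $\frac{2}{k+1}$ is never actually derived: $\int_{-1}^1 U_k(t)^2\sqrt{1-t^2}\,dt = \pi/2$ carries no $k$-dependence, so it cannot by itself ``produce the normalization constant.'' What you need is the reproducing identity
$$
 \int_B P(x,y)\, U_k(x\cos\theta + y\sin\theta)\, dx\, dy
   = \frac{\pi}{k+1}\, P(\cos\theta,\sin\theta), \qquad P \in \CV_k(B),
$$
equivalently, on the ridge spanning set, $\int_B U_k(x\cos\phi+y\sin\phi)\,U_k(x\cos\theta+y\sin\theta)\,dx\,dy = \frac{\pi}{k+1}U_k(\cos(\theta-\phi))$. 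Your fallback --- ``verify the identity on the spanning set directly'' --- is precisely this computation, which reduces to the one-dimensional chord integral
$\int_{-\sqrt{1-t^2}}^{\sqrt{1-t^2}} U_k(t\cos\alpha + s\sin\alpha)\,ds = \frac{2}{k+1}\sqrt{1-t^2}\,U_k(t)\,U_k(\cos\alpha)$; that integral is the entire $k$-dependent content of the lemma, and deferring it (together with the spanning fact that $k+1$ ridge polynomials at angles $j\pi/(k+1)$ form a basis of the $(k+1)$-dimensional space $\CV_k(B)$) to \cite{M} amounts to citing the lemma rather than proving it --- which is, in fairness, exactly what the paper itself does, but it means your sketch is not a self-contained proof.
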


Our new algorithm is based on following observation on $\lambda_{k,\nu}$
defined in \eqref{lambda}. 

\begin{prop}
If $f$ is a polynomial of degree at most $\tau N < N_d$, then $\lambda_{k,\nu}$ 
defined in \eqref{lambda} satisfies the system of equations
$$
    \lambda_{k,\mu} =   \eta\left(\frac{k}{N_d}\right)
         \frac{1}{N} \sum_{\nu =0}^{N-1} \lambda_{k,\nu} 
            U_k( \cos (\phi_\mu - \phi_\nu)), 
$$
for $0 \le k \le N_d-1$ and $0 \le \mu \le N-1$. 
\end{prop}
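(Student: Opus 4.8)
The plan is to show that, under the degree hypothesis, the discrete coefficient $\lambda_{k,\mu}$ is nothing but a normalized value of the orthogonal projection $\proj_k f$ on the unit circle, and then to feed this back into the reproducing formula \eqref{proj}. First I would observe that the sum defining $\lambda_{k,\nu}$ in \eqref{lambda} is exactly the $N_d$-point Gauss--Chebyshev quadrature (the nodes $\cos\psi_j$ are the zeros of the Chebyshev polynomial of the first kind) applied to $\frac1\pi\int_{-1}^1 \CR f(\phi_\nu,t)U_k(t)\,dt$; indeed $\sin(k+1)\psi_j=U_k(\cos\psi_j)\sin\psi_j$ and $\sin\psi_j=\sqrt{1-\cos^2\psi_j}$ turn \eqref{lambda} into the quadrature sum for that weighted integral. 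Since $\deg f$ is small, Lemma~\ref{lem1} applied to each $\proj_j f$ shows $\CR f(\phi_\nu,t)=\sqrt{1-t^2}\,q(t)$ with $\deg q\le\deg f$, so the integrand is a polynomial of degree at most $k+2+\deg f\le 2N_d-1$ and the quadrature is exact. Hence
\[
\lambda_{k,\nu}=\frac1\pi\int_{-1}^1 \CR f(\phi_\nu,t)U_k(t)\,dt
\]
for all admissible $k,\nu$, the bound holding because $\tau<1$ and $N_d$ is large.

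Next I would evaluate this integral in closed form. Writing $f=\sum_j\proj_j f$ and applying Lemma~\ref{lem1} to each term,
\[
\frac1\pi\int_{-1}^1 \CR f(\phi_\mu,t)U_k(t)\,dt
=\frac1\pi\sum_j\frac{2}{j+1}\,\proj_j f(\cos\phi_\mu,\sin\phi_\mu)\int_{-1}^1\sqrt{1-t^2}\,U_j(t)U_k(t)\,dt,
\]
and the orthogonality relation $\int_{-1}^1\sqrt{1-t^2}\,U_j(t)U_k(t)\,dt=\tfrac\pi2\delta_{jk}$ collapses the sum to the key identity
\[
\lambda_{k,\mu}=\frac{1}{k+1}\,\proj_k f(\cos\phi_\mu,\sin\phi_\mu).
\]
On the other hand, substituting the expression just proved for $\lambda_{k,\nu}$ into \eqref{proj} gives $\proj_k f(x,y)=\frac{k+1}{N}\sum_{\nu=0}^{N-1}\lambda_{k,\nu}\,U_k(x\cos\phi_\nu+y\sin\phi_\nu)$, and evaluating at $(x,y)=(\cos\phi_\mu,\sin\phi_\mu)$, where $\cos\phi_\mu\cos\phi_\nu+\sin\phi_\mu\sin\phi_\nu=\cos(\phi_\mu-\phi_\nu)$, yields
\[
\proj_k f(\cos\phi_\mu,\sin\phi_\mu)=\frac{k+1}{N}\sum_{\nu=0}^{N-1}\lambda_{k,\nu}\,U_k(\cos(\phi_\mu-\phi_\nu)).
\]
Combining the last two displays cancels the factor $k+1$ and produces the asserted system, but without the cutoff $\eta$.

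Finally I would insert the weight $\eta(k/N_d)$, which turns out to be free. For every $k>\deg f$ one has $\proj_k f\equiv0$, so the key identity forces $\lambda_{k,\nu}=0$ for all $\nu$ and both sides of the claimed equation vanish, making the value of $\eta(k/N_d)$ irrelevant; for every $k\le\deg f$ the degree hypothesis gives $k\le\lfloor\tau N_d\rfloor$, hence $k/N_d\le\tau$ and $\eta(k/N_d)=1$. Thus $\eta(k/N_d)$ may be attached to the right-hand side for all $0\le k\le N_d-1$ without altering anything, which is exactly the stated form. The part I expect to need the most care is not any single computation but the bookkeeping of the two exactness ranges --- that the Gauss--Chebyshev quadrature of \eqref{lambda} and the cubature \eqref{proj} are both exact on the relevant degrees, and that $\eta$ is transparent on the support of the nonvanishing projections; here one leans on $\tau<1$ being fixed and on $N,N_d$ being large.
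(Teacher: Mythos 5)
Your argument is correct, but it takes a genuinely different route from the paper's in its second half. Both proofs open the same way: the Gauss--Chebyshev exactness argument identifying $\lambda_{k,\nu}$ with $\frac1\pi\int_{-1}^1 \CR f(\phi_\nu,t)U_k(t)\,dt$ (using Lemma~\ref{lem1} to see that $\CR f(\phi,t)/\sqrt{1-t^2}$ is a polynomial of low degree) is exactly the paper's first step. From there the paper invokes the reproduction property $\CA_N f=f$, applies the Radon transform to the resulting expansion \eqref{CAf=f} term by term --- Marr's lemma applied to the ridge polynomials $U_k(x\cos\phi_\nu+y\sin\phi_\nu)\in\CV_k(B)$ --- and then integrates against $U_k(s)\,ds$, so that the factor $\eta(k/N_d)$ is carried along automatically as a coefficient of $\CA_N$. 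You never use $\CA_N f=f$; instead you apply Marr's lemma to each $\proj_j f$ in $f=\sum_j\proj_j f$ to extract the clean structural identity $\lambda_{k,\mu}=\frac{1}{k+1}\proj_k f(\cos\phi_\mu,\sin\phi_\mu)$, feed it into the quoted formula \eqref{proj} sampled on the circle to get the system \emph{without} $\eta$, and then show $\eta$ is transparent (for $k>\deg f$ all $\lambda_{k,\nu}$ vanish and the claim is $0=0$; for $k\le\deg f$ one has $\eta(k/N_d)=1$). This buys a genuine clarification the paper's proof obscures: the proposition's content for $k>\deg f$ is trivial, and the $\eta$-weight is a free rider. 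The trade-off is that your route leans on the exact validity of \eqref{proj} for the discrete angles, while the paper's leans only on the advertised reproduction property of $\CA_N$; logically these rest on the same angular quadrature exactness, so the proofs are cousins rather than strangers. One caveat: your step ``$k\le\deg f\le\tau N$ hence $k/N_d\le\tau$'' requires $\tau N\le\tau N_d$, i.e.\ $N_d=N$, since in general $N_d\le N$; but the paper's own proof makes the identical conflation (it asserts $\CA f=f$ for $\deg f\le\tau N$ although the stated preservation is for $\Pi_{\lfloor\tau N_d\rfloor}^2$), so you inherit, rather than introduce, this imprecision --- as you do the unexamined corner case $k=N_d-1$, $\deg f=\lfloor\tau N\rfloor$, where the quadrature integrand's degree can just exceed $2N_d-1$.
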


\begin{proof}
If $f$ is a polynomial of degree $\le \tau N$, then $\CA f =f$ and we have 
\begin{equation}\label{CAf=f}
  f (x,y) = \CA f (x,y)= \frac{1}{N} \sum_{k =0}^{N_d-1} 
    \sum_{\nu=0}^{N-1} \lambda_{k,\nu} 
     \eta\left(\frac{k}{N_d}\right)  (k+1) U_k(x \cos \phi_\nu + y \sin \phi_\nu).
\end{equation}
Since $\CR f(\phi, t)/\sqrt{1-t^2}$ is a polynomial in $t$ of degree at most 
$\tau N$, as can be seen from Lemma \ref{lem1}, and we derived 
\eqref{AN} by applying Gaussian quadrature of degree $2N_d-1$ with
respect to the Chebyshev weight, it follows that 
\begin{align*}
 \lambda_{k,\mu} =\frac{1}{N_d}  \sum_{j=0}^{N_d -1} \sin(k+1) \psi_j
         \CR f(\phi_\nu,\cos \psi_j) 
  =  \frac{1}{\pi} \int_{-1}^1 \CR f(\phi_\nu,t) U_k(t) dt.
\end{align*}
It is known that $ U_k(x \cos \phi_\nu + y \sin \phi_\nu)$ is an orthogonal
polynomial in $\CV_k(B)$. Hence, applying Radon transform
on \eqref{CAf=f} and using Lemma \ref{lem1}, we obtain that
$$
   \CR f(\phi, s ) = 
     \frac{2}{N} \sum_{\nu =0}^{N-1} \sum_{k=0}^{N_d-1} 
       \eta\left(\frac{k}{N_d}\right) \lambda_{k,\nu} 
           U_k(s) \sqrt{1-s^2} U_k( \cos (\phi - \phi_\nu)). 
$$
Integrating against $U_k(s)ds$ and using the orthogonality of $U_k$, 
we end up with
$$
  \frac{1}{\pi} \int_{-1}^1 \CR f(\phi,s ) U_k(s) ds = 
    \eta\left(\frac{k}{N_d}\right) \frac{1}{N} \sum_{\nu =0}^{N-1} \lambda_{k,\nu} 
          U_k( \cos (\phi - \phi_\nu)). 
$$
Setting $\phi = \phi_\mu$ in the above relation proves the stated relation.
\end{proof}

Assuming that we are given the incomplete data \eqref{D_rN}. Then we 
can compute $\lambda_{k,\mu}$ in $\Lambda_{r,N}$ defined in 
\eqref{Lambda_r}. In order to apply the OPED algorithm, we do not  
need to know each individual missing data. It is sufficient to find the 
missing $\lambda_{k,\nu}$; that is, to find
$$
           \{\lambda_{k,\nu}:  0 \le \nu \le r-1, \, 0 \le k \le N_d -1\}.
$$ 
The  proposition suggests that we solve these $\lambda_{k,\nu}$ from the
following linear system of equations: For $k =0, 1, \ldots, N_d-1$, solve 
\begin{equation} \label{MainEqn}
   \lambda_{k,\mu} - \sum_{\nu =0}^{r-1} a_{\mu,\nu}^{(k)} \lambda_{k,\nu}
    = \sum_{\nu=r}^{N-1} a_{\mu,\nu}^{(k)} \lambda_{k,\nu},
             \qquad    0 \le \mu \le r -1,
\end{equation}
where for $k = 0, 1, \ldots, N_d-1$ and $0 \le \nu, \mu \le N-1$, we define 
$$
a_{\mu,\nu}^{(k)} = \eta\left(\frac{k}{N_d}\right)
    \frac{ \sin (k+1) (\phi_\mu-\phi_\nu)}{N \sin (\phi_\mu-\phi_\nu)},
   \quad \nu \ne \mu,   \quad   \hbox{and} \quad a_{\nu,\nu}^{(k)} =  
     \eta\left(\frac{k}{N_d}\right) \frac{k+1}{N}.  
$$
Notice that $\lambda_{k,\nu}$ in the right hand side of \eqref{MainEqn} 
can be computed from the data in \eqref{D_rN} by \eqref{lambda}, so
that they are known. 

\medskip
To summarize, {\it the idea for the new algorithm is to solve \eqref{MainEqn} 
for the missing $\lambda_{k,\nu}$, and then apply OPED algorithm to the 
full set of $\lambda_{k,\nu}$ for reconstruction.}    
\medskip

Solving \eqref{MainEqn} amounts to solve $N_d$ linear systems of equations 
of size $r \times r$. In order for this proposed method to work, it is necessary
that the coefficient matrices of these systems are invertible, which we study 
in the following subsection. 

\subsection{Non-singularity of the matrices} 
In this section we assume $N_d = N$. We consider the case that $\eta(t) \equiv 1$ 
first and define 
$$
   B_{k,r}^{(N)} := \left[    b_{\mu,\nu}^{k}  \right]_{\mu,\nu \in \CV_r} , \quad 
       b_{\mu,\nu}^{k} : =    \frac{ \sin (k+1) (\phi_\mu-\phi_\nu)}{
          \sin (\phi_\mu-\phi_\nu)} = U_k(\cos (\phi_\mu - \phi_\nu)) 
$$
and
$$
       M_{k,r}^{(N)} : = I_r -  N^{-1} B_{k,r}^{(N)}
$$       
for $0\le k \le N-1$ and $0 \le r \le N-1$. The matrix $M_{k,r}^{(N)}$ is 
the coefficient matrix of \eqref{MainEqn} when $\eta(t) \equiv 1$. We note 
that these are symmetric matrices.

\begin{thm}
For $0 \le k, r\le N-1$,  \\
(a)  the matrix $M_{k,r}^{(N)}$ is nonnegative definite with all eigenvalues 
 in $[0,1]$; \\
(b) the matrix $M_{k,r}^{(N)}$ is positive definite if and only if 
$k+r < N$;\\
(c) If $k + r \ge N$, then zero is an eigenvalue of $M_{k,r}^{(N)}$ which  
has multiplicity equal to $k+r +1 - N$.
\end{thm}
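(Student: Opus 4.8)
The plan is to exploit the identity $U_k(\cos\theta)=\frac{\sin(k+1)\theta}{\sin\theta}=\sum_{m=0}^{k}e^{i(k-2m)\theta}$ underlying \eqref{Cheby}, which turns the entries of $B_{k,r}^{(N)}$ into a sum of exponentials. Writing $\omega=e^{2\pi i/N}$ and using $\phi_\mu-\phi_\nu=2\pi(\mu-\nu)/N$, one gets $b_{\mu,\nu}^{k}=\sum_{\ell\in L_k}\omega^{\ell(\mu-\nu)}$, where $L_k=\{-k,-k+2,\dots,k-2,k\}$ has $k+1$ elements. Hence $B_{k,r}^{(N)}=FF^{*}$ with $F$ the $r\times(k+1)$ matrix $F_{\mu,\ell}=\omega^{\ell\mu}$, so $B_{k,r}^{(N)}$ is automatically nonnegative definite, which already gives the lower endpoint in (a). Equivalently, for $x\in\CC^r$ one has $x^{*}B_{k,r}^{(N)}x=\sum_{\ell\in L_k}|\widehat{x}(\ell)|^{2}$ with $\widehat{x}(\ell)=\sum_{\mu=0}^{r-1}\omega^{-\ell\mu}x_\mu$, and by orthogonality of the Fourier modes $\sum_{p=0}^{N-1}|\widehat{x}(p)|^{2}=N\|x\|^2$.

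For the upper endpoint and the spectral count I would pass to the full $N\times N$ circulant $\widetilde{B}_k$ with the same symbol, $(\widetilde{B}_k)_{\mu\nu}=U_k(\cos(\phi_\mu-\phi_\nu))$. Its eigenvectors are the modes $e_p=(\omega^{p\mu})_{\mu=0}^{N-1}$, and a direct computation gives $\widetilde{B}_k e_p=N m_p e_p$, where $m_p=\#\{\ell\in L_k:\ell\equiv p\ (\mathrm{mod}\ N)\}$. I would first treat the generic case in which the $k+1$ frequencies of $L_k$ are distinct modulo $N$ --- which holds precisely when $N$ is odd or $k<N/2$ --- so that each $m_p\in\{0,1\}$ and $N^{-1}\widetilde{B}_k$ is the orthogonal projection onto $U:=\sspan\{e_p:p\in L_k\ \mathrm{mod}\ N\}$, of dimension $k+1$. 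Since $B_{k,r}^{(N)}=R\widetilde{B}_kR^{*}$ is the compression by the coordinate restriction $R:\CC^N\to\CC^r$ onto the first $r$ coordinates, the operator inequality $0\preceq N^{-1}\widetilde{B}_k\preceq I$ descends to $0\preceq N^{-1}B_{k,r}^{(N)}\preceq I_r$, i.e. $0\preceq M_{k,r}^{(N)}\preceq I_r$, which is (a).

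For (b) and (c) I would identify $\ker M_{k,r}^{(N)}$, the eigenspace of $B_{k,r}^{(N)}$ for the eigenvalue $N$. In the generic case $x^{*}B_{k,r}^{(N)}x=N\|x\|^2$ forces $\widehat{x}$ to be supported on $L_k$, so this kernel is isomorphic to $U\cap V$, where $V\cong\CC^r$ consists of the vectors supported on $\{0,\dots,r-1\}$. A vector in $U\cap V$ is an exponential polynomial $\sum_{\ell\in L_k}c_\ell\omega^{\ell\mu}$ with only $k+1$ terms that vanishes at the $N-r$ consecutive indices $\mu=r,\dots,N-1$. Because the nodes $\omega^{\ell}$ are distinct and the evaluation indices are consecutive, the corresponding evaluation functionals are linearly independent on this $(k+1)$-dimensional space (a consecutive-node Vandermonde argument, equivalently: a nonzero exponential polynomial with $k+1$ terms cannot vanish at $k+1$ consecutive integers). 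Hence $\dim(U\cap V)=(k+1)-\min(N-r,k+1)=\max(0,k+r+1-N)$, which is $0$ exactly when $k+r<N$, giving (b), and equals $k+r+1-N$ when $k+r\ge N$, giving (c).

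The main obstacle is the remaining regime $N$ even with $k\ge N/2$, where the frequencies in $L_k$ collide in pairs $\{\ell,\ell-N\}$, so some $m_p=2$, and $N^{-1}\widetilde{B}_k$ is no longer a projection (its largest eigenvalue on $\CC^N$ is $2$). The naive compression bound then only yields $M_{k,r}^{(N)}\succeq -I_r$, which is too weak, and the clean identification of $\ker M_{k,r}^{(N)}$ with $U\cap V$ fails (small examples such as $N=4,\ k=2,\ r=2$ show that an eigenvector for the eigenvalue $N$ need not lie in $U$). I would handle this case separately --- e.g. by splitting $L_k$ into two collision-free blocks and controlling the interaction of the two resulting compressed projections, or by a confluent-Vandermonde refinement of the rank computation --- and check that (a)--(c) persist verbatim; I expect essentially all of the remaining work to sit in this degenerate case. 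In the intended application one has $k\le N_d-1<N/2$, so the generic analysis already covers what is needed.
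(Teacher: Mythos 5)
Your generic-case argument is sound and travels a genuinely different road from the paper's. Where you diagonalize the full $N\times N$ circulant, characterize $\ker M_{k,r}^{(N)}$ as $U\cap V$, and count its dimension by a consecutive-node Vandermonde argument, the paper factors $B_{k,r}^{(N)}=X_kX_k^T$ with real trigonometric columns and computes $\rank X_k$ by a zero-counting argument for trigonometric polynomials vanishing at the $\phi_\nu$ (your complex factorization $FF^{*}$ is the same device in disguise). The decisive difference is how the regime $k\ge N/2$ is treated: the paper never meets your frequency collisions, because it first proves the reflection identity \eqref{M-B}. Since $\sin\bigl((N-l-1)\theta\bigr)=-\sin\bigl((l+1)\theta\bigr)$ at $\theta=\phi_\mu-\phi_\nu$, one gets $B_{k,r}^{(N)}=NI_r-B_{l,r}^{(N)}$ with $l=N-k-2$, that is, $M_{k,r}^{(N)}=N^{-1}B_{l,r}^{(N)}$. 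In your language this swaps the colliding frequency set $L_k$ for the collision-free set $L_{N-k-2}$ (note $l\le N/2-2$ whenever $k\ge N/2$), whereupon your own analysis finishes in one line: $M_{k,r}^{(N)}=N^{-1}B_{l,r}^{(N)}\succeq 0$ gives (a), and $\ker M_{k,r}^{(N)}=\ker B_{l,r}^{(N)}$ has dimension $r-\min(l+1,r)=\max(0,k+r+1-N)$, giving (b) and (c). This identity is the one idea your write-up is missing, and it replaces all of the block-splitting or confluent-Vandermonde machinery you anticipate needing.

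The deferral is also not harmless for the paper's purposes. For the small $r$ relevant to the limited-angle application, $k+r\ge N$ forces $k>N/2$, so the entire content of (b)--(c) --- exactly what Theorem~\ref{thm2}(b) consumes, with $k$ running up to $N-1$ since that subsection takes $N_d=N$ --- sits inside the case you postponed; your closing claim that the intended application only needs $k<N/2$ is therefore incorrect. A further caution: (a)--(c) do \emph{not} persist verbatim in the degenerate regime, so your plan to ``check that (a)--(c) persist'' cannot succeed in full generality. The identity \eqref{M-B} requires $\sin(\phi_\mu-\phi_\nu)\neq 0$ for $\mu\neq\nu$, which holds when $N$ is odd or $r\le N/2$, but fails for even $N$ and $r>N/2$ at index difference $N/2$, where the entry is $b_{\mu,\nu}^{k}=U_k(-1)=(-1)^k(k+1)$ rather than the sine quotient. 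Indeed the theorem itself fails there: for $(N,k,r)=(4,2,3)$ one computes $B_{2,3}^{(4)}=\left(\begin{smallmatrix}3&-1&3\\-1&3&-1\\3&-1&3\end{smallmatrix}\right)$, whose largest eigenvalue $(9+\sqrt{17})/2>4$ makes $M_{2,3}^{(4)}$ indefinite, and $0$ is not an eigenvalue at all, contradicting both (a) and (c) (which predicts a double zero). So your instinct that the collision case is delicate is vindicated --- the statement can only hold for $N$ odd or $r\le N/2$, a restriction the paper's own proof silently incurs --- but in that (application-relevant) range the one-line reduction via \eqref{M-B} completes your proof.
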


\begin{proof}
We start with an observation. Let $k = N-l-2$. Since $\phi_\nu = 2 \pi \nu / N$, it follows 
readily that $\sin (k+1) (\phi_\mu - \phi_\nu) = - \sin(l+1) (\phi_\mu-\phi_\nu)$. 
Hence, if $\mu \ne  \nu$ then $   b_{\mu,\nu}^{k} = - b_{\mu,\nu}^{l}$, whereas 
$b_{\nu,\nu}^{k} = k+1 = N - (l+1) = N - b_{\nu,\nu}^{l}$. Consequently, we see that 
\begin{align} \label{M-B}
    M_{N-l-2,r}^{(N)}  = I_r - \frac{1}{N} \left[ N I_r - B_{l,r}^{(N)} \right] 
    = \frac{1}{N} B_{l,r}^{(N)} 
\end{align}
for $0 \le N-l-2 \le N-1$ or $0 \le l \le N-2$. Thus, we only need to consider
$B_{k,r}^{(N)}$. 

Let us define column vectors $\cos_j$ and $\sin_j$ by 
$$
\cos_j = (\cos j \phi_\mu)_{\mu=0}^{r-1} \quad \hbox{and}\quad 
  \sin_j = (\sin j \phi_\mu)_{\mu=0}^{r-1}, \quad  j \ge 1,
$$ 
and let $\mathbf{1} = (1,\ldots, 1)$ also as a column vector.  
It is well known that $U_n(t)$ can be expressed as 
\begin{align*}
U_{2m} (\cos \theta) &\,= 2 \cos 2m \theta + 2 \cos (2m-2)\theta + \ldots + 2 \cos 2\theta+1 \\
U_{2m+1} (\cos \theta) &\,= 2 \cos (2m+1) \theta + 2 \cos (2m-1)\theta + \ldots + 2 \cos \theta. 
\end{align*}
Using the fact that $\cos j(\phi_\mu - \phi_\nu)= \cos j \phi_\mu 
\cos j \phi_\nu + \sin j \phi_\mu\sin j \phi_\nu$, we can then write the matrix 
$B_{2m,r}^{(N)}$ as
\begin{align*}
  B_{2m,r}^{(N)} & = \mathbf{1}\cdot \mathbf{1}^T + \cos_2 \cdot \cos_2^T 
    + \sin_2 \cdot \sin_2^T+ \ldots + 
      \cos_{2m} \cdot \cos_{2m}^T + 
      \sin_{2m} \cdot \sin_{2m}^T \\
    & = X_{2m} X_{2m}^T,  
\end{align*}
where $ X_{2m}:=  ( \mathbf{1}, \cos_2,  \sin_2, \ldots, \cos_{2m}, \sin_{2m})$ denotes 
the  matrix  that has $\mathbf{1}, \cos_2, \allowbreak \sin_2, \ldots, 
\cos_{2m}, \sin_{2m}$ as its column vectors. In the case of $k = 2m+1$, we have
$$
B_{2m+1,r}^{(N)}  = X_{2m+1}  X_{2m+1}^T, \qquad X_{2m+1}:=
    ( \cos_1, \sin_1, \cos_3,\ldots, \cos_{2m+1}, \sin_{2m+1}). 
$$ 
Considering the quadratic form $c^TB_{k,r}^{(N)}c$, if necessary, this shows that the
matrix $B_{k,r}^{(N)}$, hence $N^{-1}B_{k,r}^{(N)} = I_r - M_{k,r}^{(N)}$, is nonnegative definite. Consequently, we see that the eigenvalues of $M_{k,r}^{(N)}$ are all
bounded by $1$.  Furthermore, the matrix $X_k$ is of the size $r \times (k+1)$ so that
its rank is at most $\min \{k +1, r\}$. Consequently, if $X_{2m} c =0$ for a 
vector $c \in \RR^{2m+1}$,  then the trigonometric function 
$$
T_{2m}(t):=c_1 + c_2 \cos 2t + c_2 \sin 2t + \ldots + c_{2m} \cos 2m t 
     + c_{2m+1} \sin 2m t
$$
vanishes on the points $t = \phi_\nu$ for $0 \le \nu \le r-1$. If $r \ge 2m +1 = k+1$, 
then the trigonometric polynomial $T_{2m}$ of degree $k$ vanishes on at least
$2m+1$ points, which implies that $T_{2m}(t) \equiv 0$, so that $c =0$. It is easy 
to see that the same also holds for $k = 2m+1$. Consequently, the columns of 
$X_{k}$ are linearly independent if $r \ge k+1$. If $r < k+1$, then we consider the 
$r\times r$ matrix, $Y_k$, formed by the first $r$-th columns of $X_k$.  Considering
$Y_k c =0$ as above, we see that $Y_k$ has full rank. Consequently, $\rank(X_k) 
\ge \rank(Y_k) \ge r$. Thus, we have proved that $\rank  (X_k) =\min \{k+1,r\}$. 

If $k + 1 \ge r$ then, for $c \in \RR^r$, $c^T  B_{k,r}^{(N)} c = (c^T X_k)^2 =0$ so 
that $c =0$ as $\rank (X_k ) = r$. This shows that $B_{k,r}^{(N)}$ is positive definite, 
hence invertible. Whereas if $k+1 < r$, then the rank of $B_{k,r}^{(N)}$ satisfies 
$$
   \rank (B_{k,r}^{(N)}) \ge \rank (X_k) + \rank (X_k) - (k+1) = k+1,
$$
which shows that $\rank (B_{k,r}^{(N)}) = k+1$. Hence, $B_{k,r}^{(N)}$ is singular in
this case. Consequently we have proved that $B_{k,r}^{(N)}$ is positive definite if and
only if $k+1 \ge r$. Hence, by \ref{M-B}, the matrix $M_{k,r}^{(N)}$ is invertible if 
and only if $N-k -2 +1 \ge r$, which is equivalent to $k+r +1 \le  N$. 

Furthermore, if $k+1 < r$, then the kernel of the matrix $B_{k,r}^{(N)}$ has 
dimension $r-(k+1)$. It follows that zero is an $r-(k+1)$ fold eigenvalue of the
matrix. Again by \eqref{M-B}, this is equivalent to that 0 is a $k+r+1- N$ fold 
eigenvalue of $M_{k,r}^{(N)}$.
\end{proof}

Since we need to solve \eqref{MainEqn} for all $k = 0,1,\ldots, N-1$, the above 
result shows that the method will not work with $\eta(t) =1$ for any $r \ge 1$. 
The role that $\eta$ plays then becomes essential.  

Let us define by $A_{k,r}^{(N)}$ the coefficient matrix of the system 
\eqref{MainEqn},
$$
   A_{k,r}^{(N)} : = I_r - \left[ a_{\mu,\nu}^{(k)}\right]_{\mu,\nu \in \CV_r} 
     = I_r -  \eta\left(\frac{k}{N}\right) \left[   \frac{ \sin (k+1) (\phi_\mu-\phi_\nu)}{
        N \sin (\phi_\mu-\phi_\nu)} \right]_{\mu,\nu \in \CV_r},
$$
where $I_r$ is the identity matrix of $r \times r$. This is also a 
symmetric matrix. 

\begin{thm} \label{thm2}
For $0 \le k, r\le N-1$,  \\
(a)  if $k +r < N$, then the matrix $A_{k,r}^{(N)}$ is positive definite with all 
eigenvalues in $(0,1]$; \\
(b) if $k+r \ge N$, then the matrix $A_{k,r}^{(N)}$ is positive definite if and 
only if  $\tau < 1-r/N$. 
\end{thm}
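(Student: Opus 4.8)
The plan is to realize $A_{k,r}^{(N)}$ as an affine combination of $I_r$ and the matrix $M_{k,r}^{(N)}$ analyzed in the preceding theorem, so that its spectrum follows at once. Write $\eta_k := \eta(k/N)$. Since $M_{k,r}^{(N)} = I_r - N^{-1} B_{k,r}^{(N)}$, we have $N^{-1} B_{k,r}^{(N)} = I_r - M_{k,r}^{(N)}$, and substituting into the definition of $A_{k,r}^{(N)}$ gives
\[
A_{k,r}^{(N)} = I_r - \eta_k N^{-1} B_{k,r}^{(N)} = (1-\eta_k) I_r + \eta_k M_{k,r}^{(N)}.
\]
As $M_{k,r}^{(N)}$ is real symmetric, it is orthogonally diagonalizable, and every eigenvalue $\alpha$ of $A_{k,r}^{(N)}$ has the form $\alpha = (1-\eta_k) + \eta_k \lambda$ for a corresponding eigenvalue $\lambda$ of $M_{k,r}^{(N)}$. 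The properties of $\eta$ give $0 \le \eta_k \le 1$ for every index $k$ in range, with $\eta_k = 1$ exactly when $k/N \le \tau$ and $\eta_k < 1$ exactly when $k/N > \tau$; this last equivalence is the only input about $\eta$ that I will need.

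For part (a), assume $k + r < N$. Parts (a) and (b) of the preceding theorem then say $M_{k,r}^{(N)}$ is positive definite with all eigenvalues $\lambda \in (0,1]$. For such $\lambda$ and any $\eta_k \in [0,1]$ the image $\alpha = (1-\eta_k) + \eta_k \lambda$ satisfies $\alpha \le (1-\eta_k) + \eta_k = 1$, and $\alpha > 0$ as well: if $\eta_k = 1$ then $\alpha = \lambda > 0$, while if $\eta_k < 1$ then $\alpha \ge 1 - \eta_k > 0$. Hence all eigenvalues of $A_{k,r}^{(N)}$ lie in $(0,1]$ and the matrix is positive definite, independently of $\eta$.

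For part (b), assume $k + r \ge N$. By part (c) of the preceding theorem, zero is an eigenvalue of $M_{k,r}^{(N)}$, so $\alpha = 1 - \eta_k$ is an eigenvalue of $A_{k,r}^{(N)}$; the eigenvalues arising from $\lambda > 0$ stay strictly positive exactly as in part (a). Hence $A_{k,r}^{(N)}$ is positive definite if and only if $1 - \eta_k > 0$, that is $\eta_k < 1$, that is $k/N > \tau$. Since $k + r \ge N$ forces $k \ge N - r$, the condition $\tau < 1 - r/N$ gives $k/N \ge 1 - r/N > \tau$ for every such $k$, hence positive definiteness throughout this range; conversely, at the smallest index $k = N - r$ one has $k/N = 1 - r/N$, and if $\tau \ge 1 - r/N$ then $\eta_{N-r} = 1$, so $A_{N-r,r}^{(N)}$ retains the zero eigenvalue and fails to be positive definite. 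This is the asserted equivalence. The computation is light; the one point needing care is this \emph{only if} direction, where it is essential that $\eta$ is identically $1$ (not merely bounded by $1$) on $[0,\tau]$, so that at the critical index $k = N-r$ the weight equals $1$ and reinstates the zero eigenvalue of $M_{k,r}^{(N)}$.
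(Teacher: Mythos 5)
Your proof is correct and follows essentially the same route as the paper's: both rest on the spectral relation $\mu_j(A_{k,r}^{(N)}) = 1-\eta(\tfrac{k}{N}) + \eta(\tfrac{k}{N})\,\mu_j(M_{k,r}^{(N)})$ together with the preceding theorem's description of the spectrum of $M_{k,r}^{(N)}$. The only differences are minor refinements on your side: in (a) you avoid the paper's use of the monotonicity bound $\eta(\tfrac{k}{N}) \ge \eta(1-\tfrac{r}{N})$ by a direct case split on $\eta_k = 1$ versus $\eta_k < 1$, and in (b) you state the ``only if'' direction more carefully than the paper (which only treats $\tau = 1-r/N$ explicitly), correctly locating the singular matrix at the critical index $k = N-r$ for any $\tau \ge 1 - r/N$.
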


\begin{proof}
Let us denote the eigenvalues of a matrix $A$ by $\mu_j(A)$. By the 
definition, it is easy to see that 
$\mu_j(I_r - A_{k,r}^{(N)})= \eta(\tfrac{k}{N}) \mu_j(I_r - M_{k,r}^{(N)})$,
which implies that 
\begin{equation} \label{eigen=}
   \mu_j(A_{k,r}^{(N)}) = 1-  \eta(\tfrac{k}{N})+  \eta(\tfrac{k}{N}) \mu_j( M_{k,r}^{(N)}).
\end{equation}
If $k+r < N$, then $\mu_j( M_{k,r}^{(N)}) >0$ for $k+r < N$ by the theorem, and
\eqref{eigen=} implies that 
$$
\mu_j(A_{k,r}^{(N)}) \ge \eta(\tfrac{k}{N}) \mu_j( M_{k,r}^{(N)}) 
  \ge \eta(1-\tfrac{r}{N})  \mu_j( M_{k,r}^{(N)}) >0
$$ 
since $\eta$ is non-increasing. Thus, for $k +r < N$, the matrix $A_{k,r}^{(N)}$
is positive definite. If $k+r \ge N$, then $M_{k,r}^{(N)}$ is nonnegative definite
and has zero as an eigenvalue of multiplicity $k+r+1-N$. By \eqref{eigen=},
$A_{k,r}^{(N)}$ has $1-\eta(\tfrac{k}{N})$ as an eigenvalue of multiplicity 
$k+r +1 - N$, and $A_{k,r}^{(N)}$ is positive definite if and only if 
$1- \eta(\frac{k}{N}) > 0$. Since $k+r \ge N$, we have $k = N-r, N-r+1,\ldots, N-1$.
The assumption $\tau < 1- r/ N$ implies then that $\frac{k}{N} >\tau$ for 
$k +r \ge N$and, consequently, $1- \eta(\frac{k}{N}) > 0$ as $\eta$ is strictly 
decreasing on $[\tau,1]$. On the other hand, if $\tau = 1-r/N$, then 
$\eta(\frac{N-r}{N}) = \eta(\tau) =1$, so that $A_{k,r}^{(N)}$ has at least one 
zero eigenvalue when $k \ge N-r$ and, hence, is singular. 
\end{proof} 

As a consequence of this theorem, the matrices $A_{k,r}^{(N)}$ are all positive
definite, hence invertible, if $r < (1-\tau)N$, where $\tau$ is the cut-off point in 
$\eta$. Thus, the condition $r < (1-\tau)N$ becomes a necessary condition for
the algorithm to work. The reason that it is not sufficient lies in the numerical
analysis. Theoretically, this condition is sufficient for $A_{k,r}^{(N)}$ to be 
invertible, but these matrices can be severely ill-conditioned which render the 
algorithm useless. For a positive definite matrix, the conditional number can 
be defined as the ratio of its largest eigenvalue over its smallest eigenvalue;
that is, if $A$ is a $r\times r$ positive definite matrix with eigenvalues
$\mu_0, \ldots, \mu_{r-1}$, then
$$
  \mathrm{cond} (A) :=  \max_{0 \le j \le r-1} \mu_j/ \min_{0\le j\le r-1} \mu_j. 
$$
By \eqref{eigen=} and the proof of the last theorem, if $ k +r \ge N$, then the 
smallest eigenvalue of $A_{k,r}^{(N)}$ is $1 - \eta(1-r/N)$, which can
be very small when $\tau$ is close to $1- r/N$, as $\eta$ is strictly decreasing
on $[\tau, 1]$. Thus, it is necessary to take $r$ away from $1-\tau/N$, or, in
other words, choose $\tau \le 1-r/N + \varepsilon$ for some $\varepsilon > 0$,
to prevent the matrices $A_{k,r}^{(N)}$ become too ill-conditioned. On the 
other hand, when $k < \tau N$, we have $ A_{k,r}^{(N)} = M_{k,r}^{(N)}$
and the matrices $M_{k,r}^{(N)}$ can be severely ill-conditioned. Thus, we
often have to choose $\tau$ fairly small. 

The eigenvalues of a related matrix, $C_\Phi$, were studied by Slepian in
\cite{Sle}, where
$$
  C_\Phi = \left (c_{\mu,\nu}  \right)_{\mu,\nu =0}^{r-1}, \qquad
       c_{\mu,\nu}  =    \frac{\sin 2 (\mu-\nu)\Phi}{(\mu-\nu)\pi}. 
$$
When $0 < \Phi < \pi /2$, the eigenvalues of $C_\Phi$ are all between $(0,1)$
and the asymptotic of the largest eigenvalue $\mu_0$ is given in \cite{Sle},
which shows that $1 - \mu_0$ can be exponentially decay  as $r \to \infty$ 
(for precise statement, see \cite[p. 1387]{Sle} with the notation 
$\lambda_k(r, \Phi)$). 
If $\Phi = (k+1)\pi /N$, then we see that 
$$
c_{\mu,\nu} =   \frac{k+1}{N}  \frac{\sin (k+1)( \phi_{\mu}  - \phi_\nu)}
    { \phi_{\mu}  - \phi_\nu},
$$
which is similar to our $b_{\mu,\nu}^k$. For fixed $r, k$ and $N$ sufficiently 
large, the matrix $C_\Phi$ with $\Phi = (k+1)/N$ can be regarded as a close
approximation to $B_{k,r}^{(N)}$, so that the eigenvalues of $C_\Phi$ gives
some indication to the eigenvalues of $B_{k,r}^{(N)}$, and hence, those of
$M_{k,r}^{(N)}$. However, a small perturbation  in the entries of the matrix 
may lead to a large change in the eigenvalues; thus, it is of interesting
to understand the eigenvalues of $M_{k,r}^{(N)}$ itself.  

It should be mentioned that the matrix $C_\Phi$ and its eigenvalues are 
instrumental in deriving the singular values of the Radon transform 
(\cite{L1,L2}) as well as in completing data using singular value decomposition  
for the limited angle problem. 

\subsection{Algorithms for limited angle problem}
We now consider the limited angle problem for which the given data set
is \eqref{D_rN} and we assume that $N$ is even. With simple modification,
the method will work with odd $N$ as well. 

Recall that for $N$ being even, we use \eqref{ANeven} instead of \eqref{AN},
so that we replace $N$ in the systems of linear equations in \eqref{MainEqn} 
by $N/2$ and the coefficient matrices of these systems  are non-singluar, 
according to Theorem \ref{thm2}, if $\tau < 1- 2r/N$ provided $N_d = N/2$. 
Below we sum up the algorithm for limited angle problem and we assume 
$N_d = N/2$.  

\begin{alg} {\bf (Algorithm for limited angle problem)} Given Radon data
$\{g_{\nu,k}:  \,  r \le \mu \le N/2-1, 0 \le k \le N/2-1\}$, where $N$ is an even
integer. 

\medskip\noindent
Setp 1. For $\mu = r,\ldots, N/2-1$, compute for $k=0,1,\ldots, N/2$ by FFT
$$
\lambda_{k,\mu} = \sum_{j=0}^{N/2-1} g_{j,\mu} \sin (k+1)\psi_j, \quad 
 \psi_j = \frac{(2j+1) \pi}{N}.  
$$

\medskip\noindent
Step 2. For a given $r$ choose $\tau$ so that $\tau < 1- {2r}/{N}$
and choose an $\eta$. For $k = 0, 1, \ldots, N/2-1$ solve 
linear system of equations 
\begin{equation} \label{MainEqn2}
   \lambda_{k,\mu} - \sum_{\nu =0}^{r-1} a_{\mu-\nu}^{(k)} \lambda_{k,\nu}
  = \sum_{\nu = r}^{N/2-1} a_{\mu-\nu}^{(k)} \lambda_{k,\nu}, 
        \qquad     0 \le \mu \le r-1,
\end{equation}
for $\lambda_{\mu,k}$, $0 \le \mu \le r-1$, where 
$$
a_{\mu}^{(k)} = 2 \eta\left(\frac{2 k}{N}\right)
    \frac{ \sin (k+1) (\phi_\mu)}{N \sin \phi_\mu},
   \quad \mu\ne 0,   \quad   \hbox{and} \quad a_{0}^{(k)} =  
     2 \eta\left(\frac{2 k}{N}\right) \frac{k+1}{N}. 
$$

\medskip\noindent
Step 3. Augmenting $\lambda_{k,\nu}$ computed in Step 1 and Step 2 to 
obtain a full set 
$$
\Lambda_N : = \{\lambda_{k,\mu}: 0 \le \nu \le N/2-1, 0 \le k \le N/2-1\}
$$ 
and applying OPED Algorithm \ref{alg:OPEDeven} on $\Lambda_N$ to 
reconstruct the image. 
\end{alg} 
 
The output of the second step of  the algorithm gives approximation for the
missing data $\lambda_{0,k}, \ldots, \lambda_{r-1,k}$ for $k=0, 1, \ldots, 
N/2-1$.  Notice that the algorithm does not complete the data set itself, what it
completes is the set of sine transform s$\lambda_{k,\mu}$ of the data.

We now turn to the problem of how to choose $\eta$. Let $h_k(t)$ be a 
polynomial of degree $2k +1$ such that $h_k(0) = 1$, $h_k^{(j)} (0) =0$
for $1 \le j \le k$, and $h_k^{(j)} (1) =0$ for $0 \le j \le k$. Such a polynomial
is given explicitly by
$$
     h_k(t) = (1 - t)^{k + 1} \sum_{j=0}^k \binom{k + j}{j} t^j. 
$$
For a fixed $k$ we then define $\eta(t)$ by 
\begin{equation}\label{eta}
     \eta(t) := \begin{cases} 
               1, & 0 \le t \le \tau, \\ 
               h_k\left(  \frac{t - \tau}{1-\tau}  \right), & \tau \le t \le 1 \\
               0, & t > 1. 
      \end{cases} 
\end{equation} 
Then $\eta \in C^{k}(\RR)$ and it satisfies the desired property. The function
$\eta$ curtails the values of high degree $\proj_k f$ in the expansion \eqref{SN}. 
Note that $\eta$ does not have to be zero at $t =1$. In fact, we can 
choose $\eta$ so that it is smooth on $[0,1]$, $\eta(1) = 1$ for $0 \le t \le \tau$
and $\eta(t)$ decreasing to $\eta(1)=\beta \ge 0$ on $[\tau, 1]$.  For example, 
here is such a function in $C^3$, 
$$
             h_{k,\beta} (t) := (\beta - 1) (3 t^2 - 2 t^3) + 1,
$$
which when used in \eqref{eta} gives a function in $C^3$ so that $\eta(1) = \beta$.  

Naturally then we face the problem of how to choose $\tau$ and $\beta$. 
As the discussion at the end of the previous subsection shows, we 
should choose $\tau$ reasonably small to avoid the ill-conditioning of 
the matrices. The condition $\tau < 1-\frac{2r}{N}$, however, is only a necessary
condition; we need, in practice, $\tau$ substantially smaller. There is, however,
a balance, as the algorithm preserves polynomials up to degree $\tau N_d$. 
Small $\tau$ means lower degree of polynomial preservation and less 
accuracy in reconstruction. This is where $\beta$ comes into the picture. 
If $\beta$ is large, say $\beta = 0.95$, then $\eta$ will decreasing slowly down 
from 1 to 0.95, and we will have almost polynomial preserving property. The 
experiments have shown that larger $\beta$ may lead to worse condition 
numbers of the matrices, but the increasing is not drastic. On the other
hand, the condition numbers increases drastically as $\tau$ increases.

For a fixed $N$ we can compute the condition numbers of $A_{k,r}^{(N)}$ 
numerically. We give an example. Notice that when $r$ is fixed, the 
available data $\{g_{k,\nu}: r \le \nu \le N/2-1, 0 \le k \le N/2-1\}$ is over an
arc of $\pi - 2 \pi r/N$ radiant or the missing data is over 
$$
\alpha:=  2 \pi r/N = (360  r /N)^\circ.  
$$
In other words, the given data is limited with angles over an arc of $180 - \alpha$
degree and the missing data is over $\alpha$ degree. 

\medskip 
Let us take for example $N =502$, which means the full data consists of
$251$ views of equally spaced angles over $[0,\pi]$ and $251$ rays per view. 
For the incomplete data, if $r = 21$, then the available data is limited to an 
arc of $165^\circ$, a $15^\circ$ difference from the full data. If $r = 42$, then 
the data is limited to an arc of $150^\circ$, a $30^\circ$ difference from 
the full data. In Table 1, the the maximum of the condition numbers for our 
matrices, rounded to nearest integers,  are given for different values of 
$\tau$ and $\beta$ in the cases of $r = 21$ and $r =42$.

\begin{table}[htdp]
\caption{Maximum of condition numbers}
\begin{center}
\begin{tabular}{c|c|c|c| c c |c|c|c|c}  
  r = 21  & &&&&\qquad\qquad   r= 42  &&&\\
  & $\tau$ & $\beta$ & $\max$ &  & & $\tau$ & $\beta$ & $\max$ \\
   & 0.0 &0.5 &44     &&& 0.0  & 0.5 & 135 \\  
   & 0.0 &0.9 &160    &&& 0.0 & 0.9 &  503 \\ 
   & 0.1 &0.5 &293    &&& 0.1 & 0.5 &  60295 \\ 
   & 0.1 &0.9 &716  &&& 0.1 & 0.9 &  68296 \\ 
   & 0.2 &0.5 &48900   &&& 0.2 & 0.5 & $ 3.66715\times 10^{10}$\\ 
   & 0.2 &0.9 &48928  &&& 0.2 & 0.9 &  $3.66715\times 10^{10}$ 
 \end{tabular} 
\end{center}
\end{table}%
 
For example, in the case of $r =21$, $\tau = 0.0$ and $\beta = 0.9$, the
maximum of the condition number is merely 160. The maximum is very 
large in the case of $r =42$ and $\tau =0.2$, showing that the matrix 
$A_{k,r}^{(N)}$ is severely ill-conditioned for some $k$ in this case. 
Furthermore, the maximum of the condition numbers appears to increase 
drastically as $r$ increases as well as $\tau$ increases. Another interesting 
fact is that the dependence on $\beta$ appears to be insignificant for larger 
$r$ and larger $\tau$. In the Figure \ref{condN}, the distribution of the 
condition numbers in the case of $r = 42$, $\tau = 0$ and $\tau = 0.2$ is 
plotted, which shows that not all matrices among $A_{k,r}^{(N)}$ become 
ill-conditioned. 

\begin{figure}[ht] 
\centerline{\includegraphics[width = 6cm]{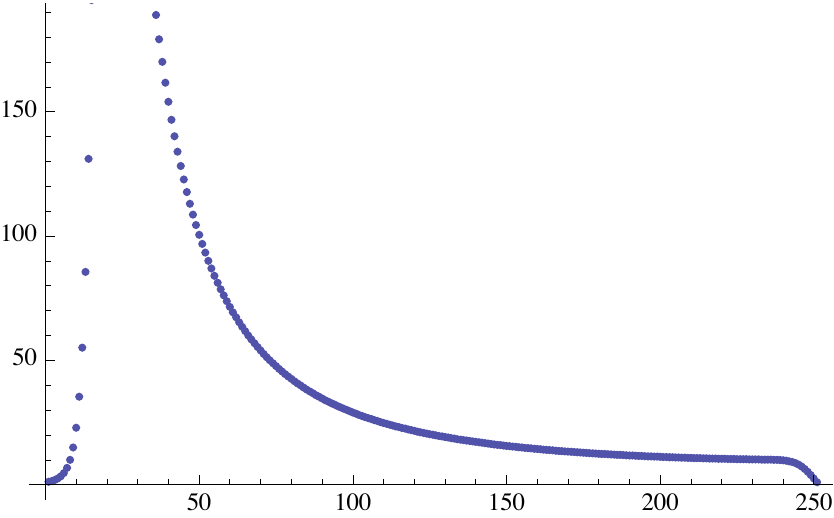} \,\,
  \includegraphics[width = 6cm]{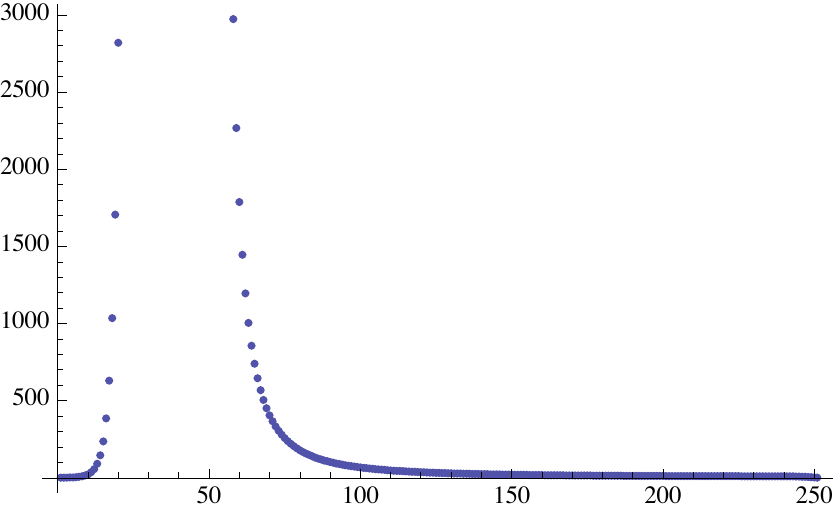} }
\caption{Condition numbers for $r =42$. Left: $\tau =0$. 
Right: $\tau = 0.2$.} \label{condN}
\end{figure}

An interesting fact is that the conditional numbers in the case of $\tau =0$ 
remain reasonably in check even when $r$ is large, as seen in the following
table, where we choose $\beta = 0.9$ to compensate $\tau =0$. 

\begin{table}[htdp]
\caption{Maximum of condition numbers for $\tau =0$ and $\beta = 0.9$}
\begin{center}
\begin{tabular}{c|c|c|c| c |c |c|c|}  
      $r$  & 21 & 42 & 63 & 83 & 126 \\
     $\max$& 160& 503 & 1037 & 1757 & 4084 \\
      limited angle & $165^\circ$ &$150^\circ$ & $135^\circ$ & $120^\circ$ &$90^\circ$  
\end{tabular} 
\end{center}
\end{table}

In the case of $r = 126$, the given data is distributed over an arc of $90^\circ$,
which means that half of the full data. In this case, the maximum of the condition 
number is 4084 for $\beta = 0.9$, which is still not too large. However, $\tau =0$ 
means that the algorithm no longer preserves polynomials and this is the case
that should be avoided. Still, by choosing $\beta$ large so that the result of the 
sampling on the coefficients is not too far away from polynomial preservation,
the case $\tau =0$ can be used to reconstruct of images as our numerical 
tests have shwon. In general, however, we should work with positive $\tau$ 
whenever we can. This is supported by the numerical experiments discussed 
in the next section. 

\section{Numerical Experiments and Discussions}
\setcounter{equation}{0}

We have applied the algorithm in the previous section on several examples,
which are presented and discussed below. Recall that our data of limited angle
consists of 
$
       \{ g_{\nu,k}: r \le \nu \le N/2-1, 0 \le k \le N/2-1 \},  
$
where $N$ is an even integer, and the angle within which the data is distributed
is, for a given $r$,  
$18 0^\circ - (360 r /N)^\circ.$

\subsection{Shepp-Logan phantom} For our first numerical example, 
we use the classical head phantom of Shepp-Logan \cite{SL}.  This phantom 
is shown in Figure \ref{fulldata}. 

\begin{figure}[ht]
\centerline{\includegraphics[width = 6cm]{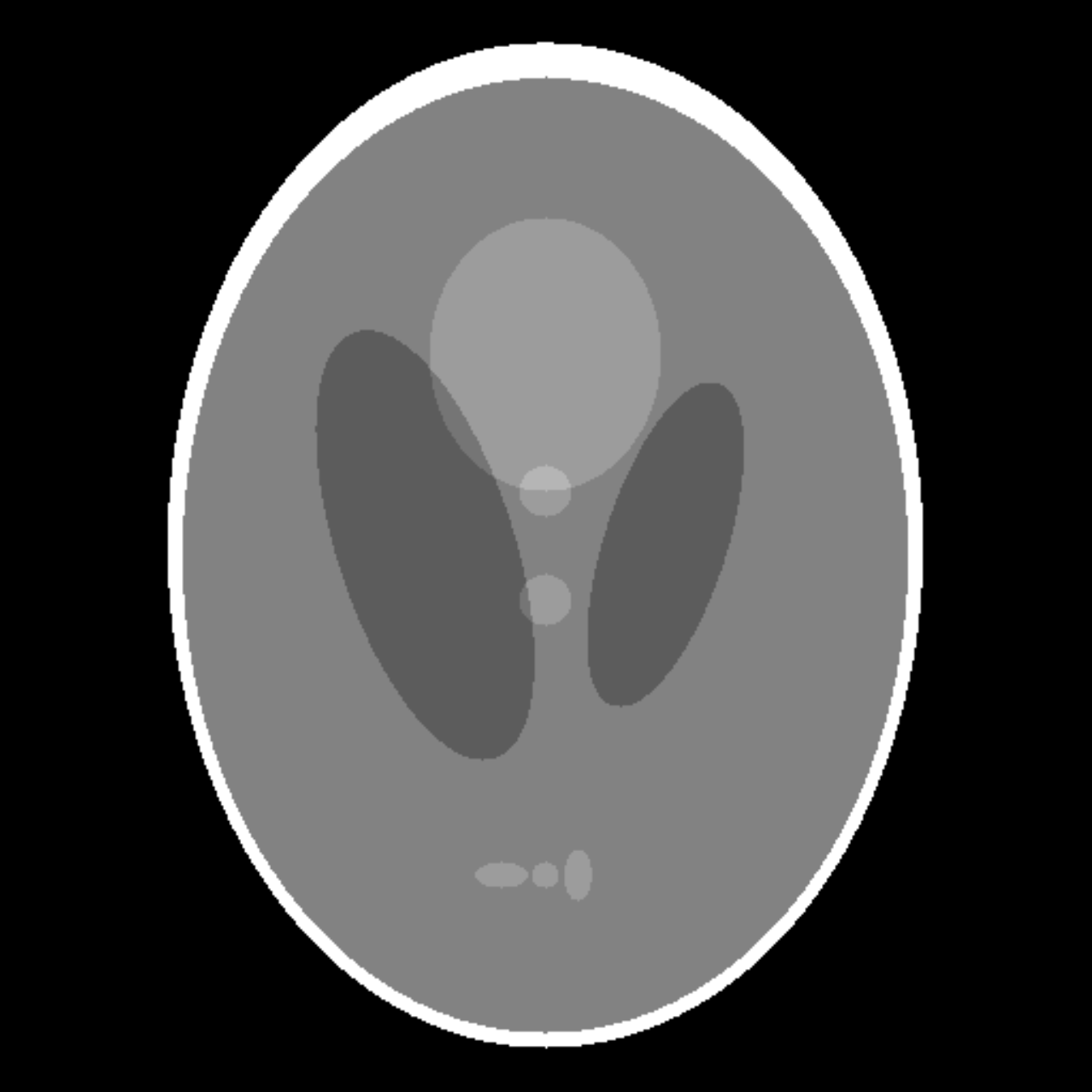} 
\includegraphics[width = 6cm]{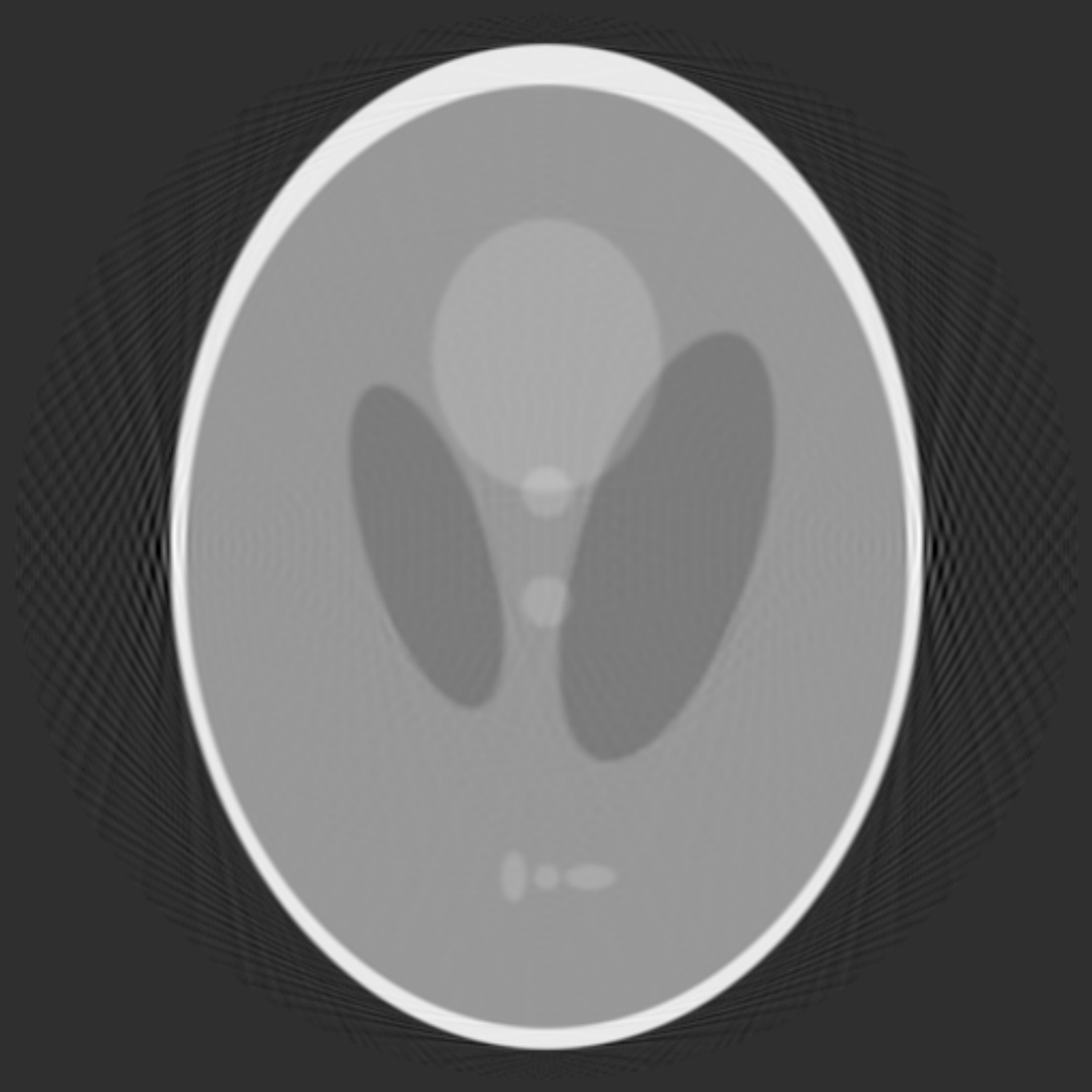}  }
\caption{Reconstruction based on full data} \label{fulldata}
\end{figure}

The left figure is the original phantom. The right figure is the 
reconstruction by OPED based on the full data with $N = 502$, which means 
251 views with angles equally distributed over $[0,\pi]$ and 251 rays per view,
and the size of the reconstruction is $256 \times 256$ pixels. 
Reconstruction based on the full data has been discussed in 
\cite{HOXH, XO, XTC}, we will not give further details here as our purpose is 
to demonstrate the feasibility of our method on the limited angle problem.

For the reconstruction on the limited angle data, we choose the same set-up, 
with 201 angles over $[0,\pi]$ and 201 equally spaced parallel rays in each view. 

In our first example, $r =21$, which amounts to data limited in an angle of  
about $165^\circ$; in other words, views from about $15^\circ$ angle are
missing. The reconstruction by our algorithm is given in Figure \ref{15degree}
in which $\beta =0.9$ and $\tau =0$ for the left figure and $0.2$ for the right 
figure.

\begin{figure}[ht]
\centerline{\includegraphics[width = 6cm]{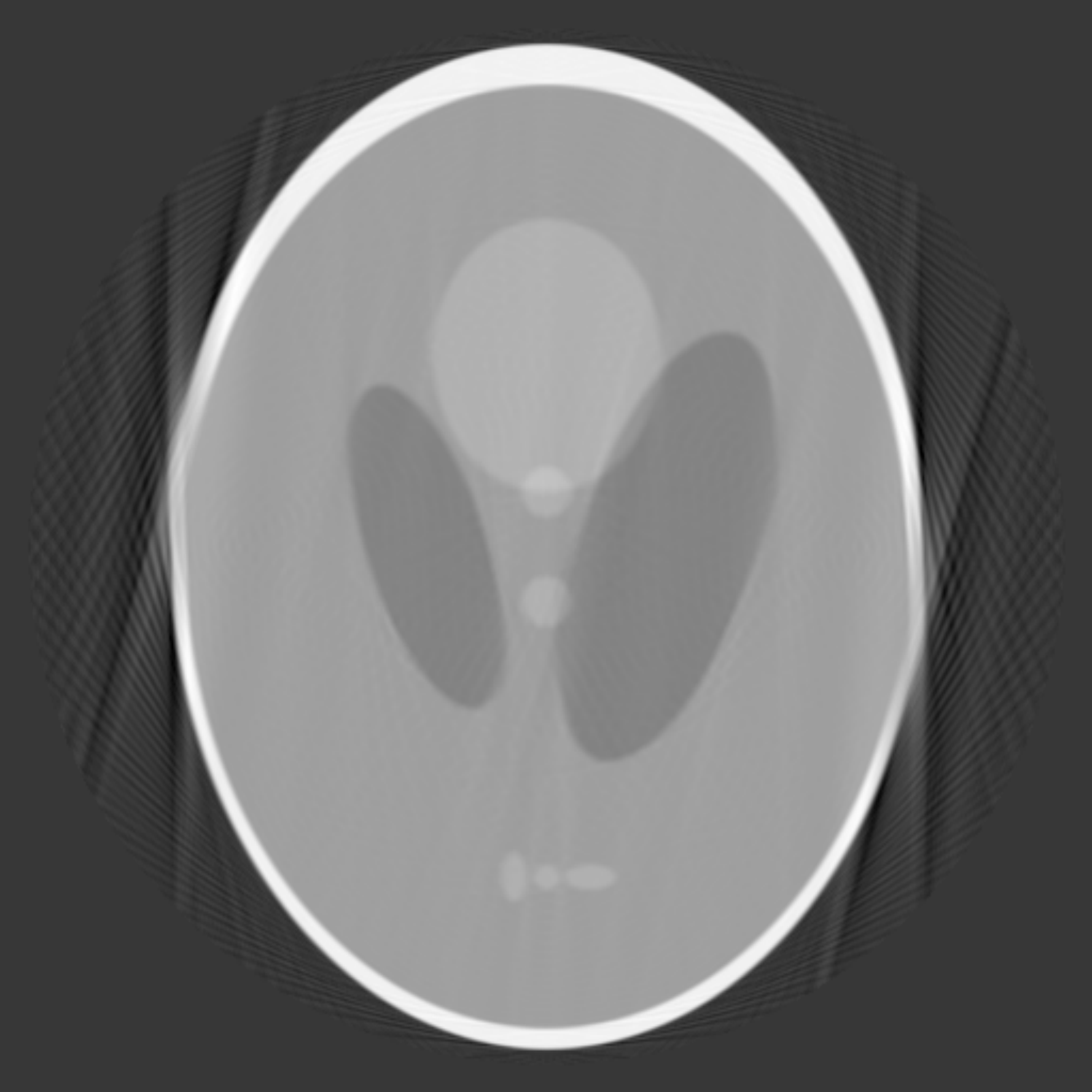} 
\includegraphics[width = 6cm]{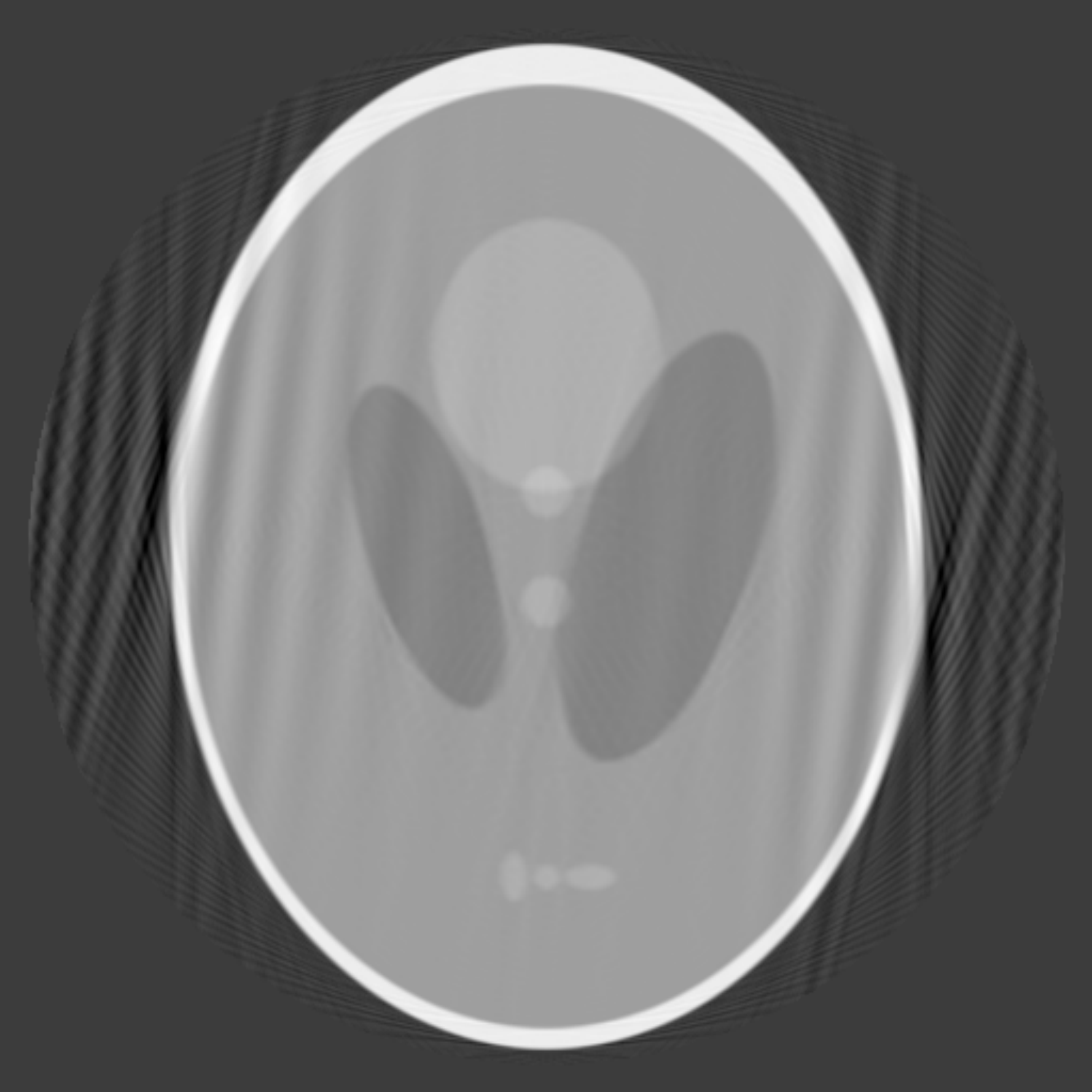} }
\caption{Reconstruction with $r=21$. Left: $\tau = 0$ 
\quad Right: $\tau = 0.2$} \label{15degree}  
\end{figure}

The left image is reconstructed with $\tau = 0$ and $\beta = 0.9$; it is 
a fairly accurate reconstruction, although there are noticeable artifacts 
in the direction of missing views and a bit distortion around two spots 
on the edges. The right image is reconstructed with $\tau = 0.2$ and 
$\beta = 0.9$; it shows clearly artifacts of ripples, but the image appears 
to be sharper and has less distortion than the one in the left otherwise.
In the case of $\tau=0$, the
maximum of the condition numbers of the matrices $A_{k,r}^{(N)}$ is 
160, so that the matrices are rather well conditioned. In the case of 
$\tau =0.2$, the maximum of the conditions numbers is $48928$, which
may have contributed to the ripples in the image. 

The condition that guarantees the non-singularity of the matrices in this
case is $\tau < 1- 42/502 \approx 0.916335$, whereas our computation 
of eigenvalues shows that $\tau$ has to be much smaller in order that 
the matrices are well conditioned. For our other examples, we will mostly
take $\tau =0$. The choice of $\beta =0.9$ means that our sampling of 
coefficients follows a curve that decreases from 1 to 0.9, a decline that 
is rather mild, which leads to reasonable reconstruction image.

In our next example, we consider the case $r = 42$, which means that the 
data is limited to views with angles distributed over an arc of $150^\circ$. 
The reconstruction with $\tau = 0$ and $\beta = 0.9$ is given in Figure \ref{r=42}.

\begin{figure}[ht]
\centerline{\includegraphics[width = 6cm]{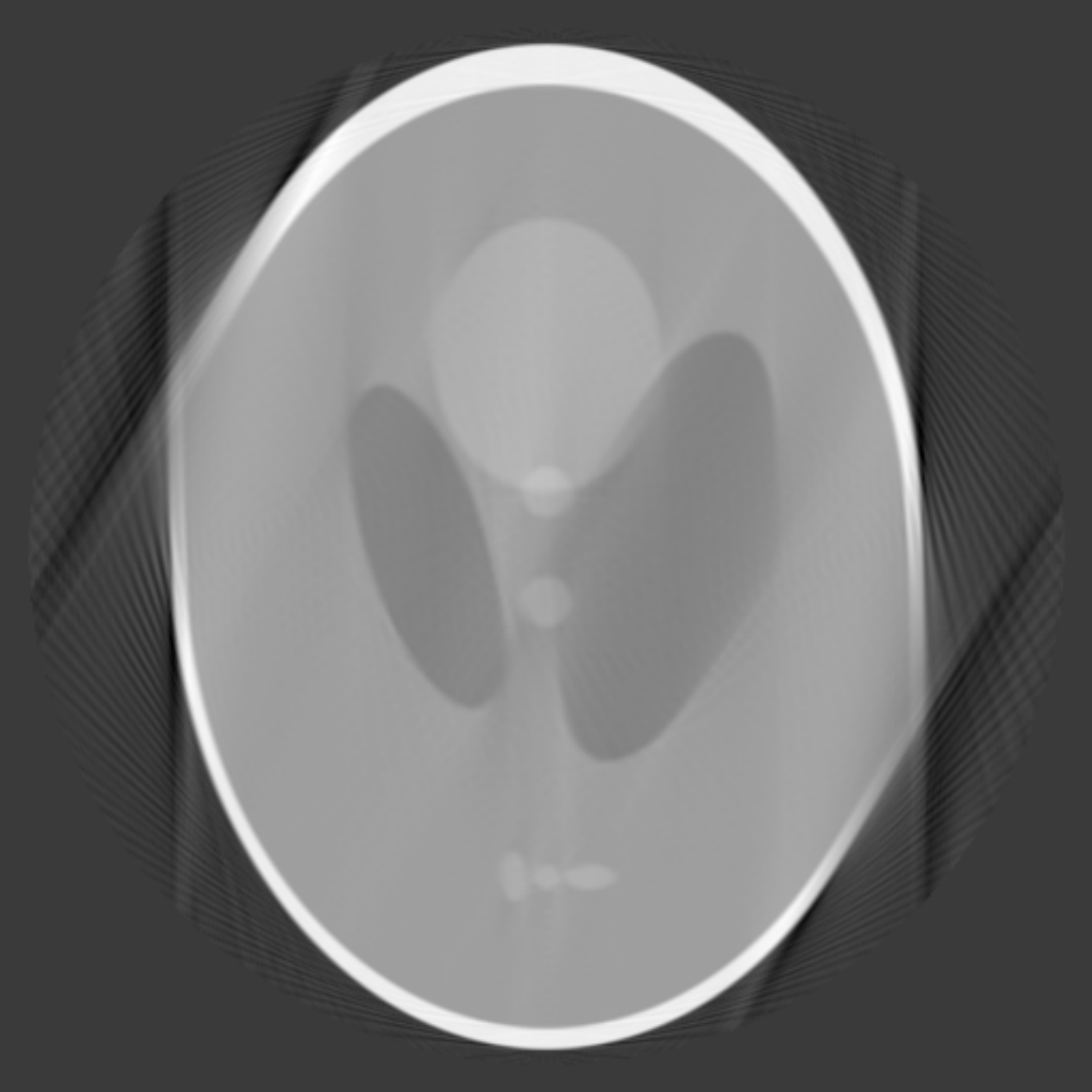} }
\caption{Reconstruction when $r=42$. } \label{r=42}
\end{figure}

\noindent
In this image, artifacts and distortion are clearly visible and most prominent
at two points on the edges of the images. The maximum of the conditional
numbers in this case is merely 503, so that the matrices are in fact fairly
well conditioned. This suggests that the distortion is likely caused by the 
choice of $\tau =0$, which means that no polynomial preservation is kept. 

\subsection{Data with noise} 
The limited angle problem is well known to be ill-posed. Below we present 
our reconstruction with noise data. We use again the Shepp-Logan head 
phantom but add noise in the data, which is Gaussian normally distributed 
with zero mean and a standard deviation 0.03. The noise is about 2\% in
the data. For limited angle, we choose $r =21$ and $42$, respectively, which 
correspond to data limited over an arc of $165^\circ$ and $150^\circ$, 
respectively. The reconstructed images by our algorithm with $\tau = 0$ 
and $\beta = 0.9$ are given in Figure \ref{noise}. 

\begin{figure}[ht]
\centerline{\includegraphics[width = 6cm]{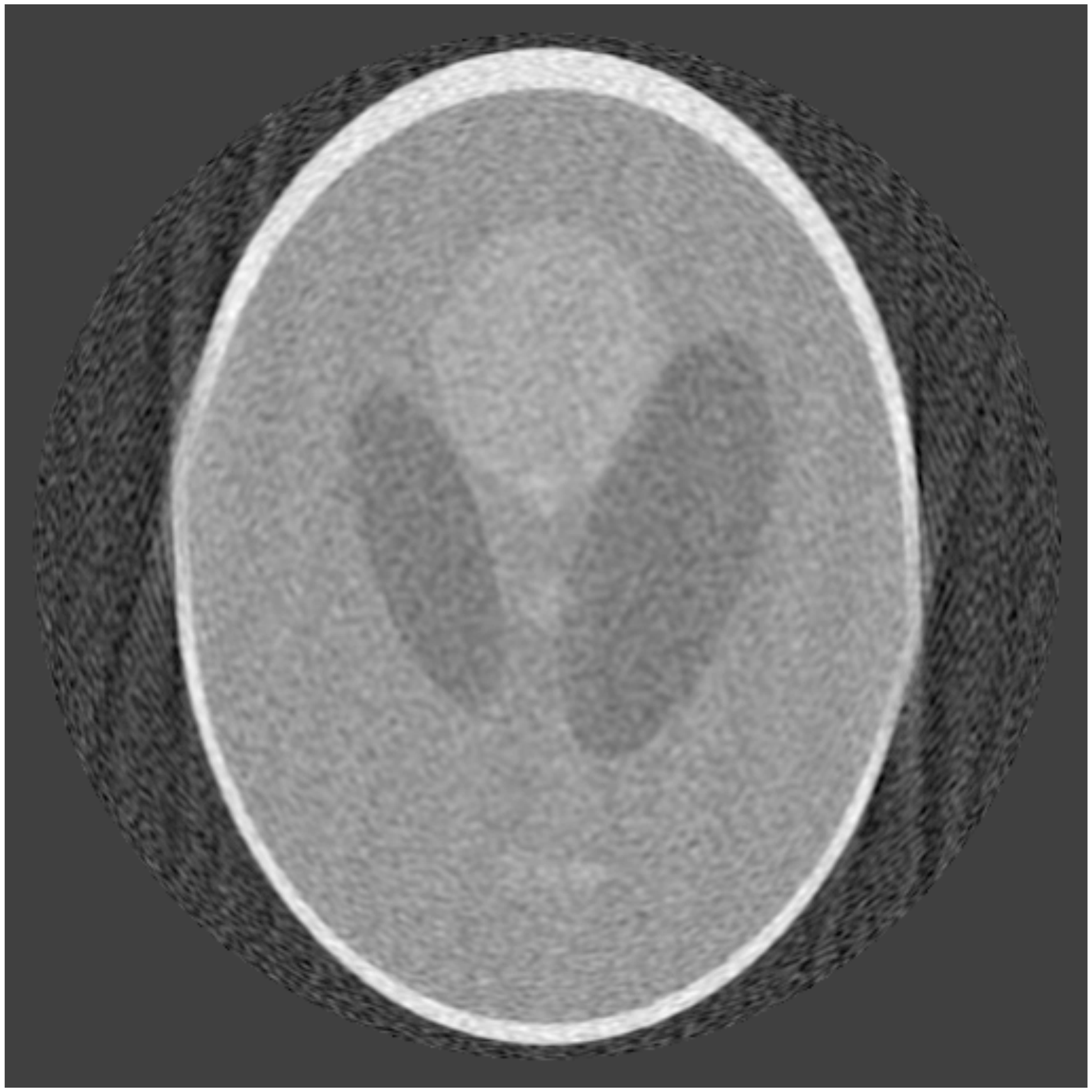} 
   \includegraphics[width = 6cm]{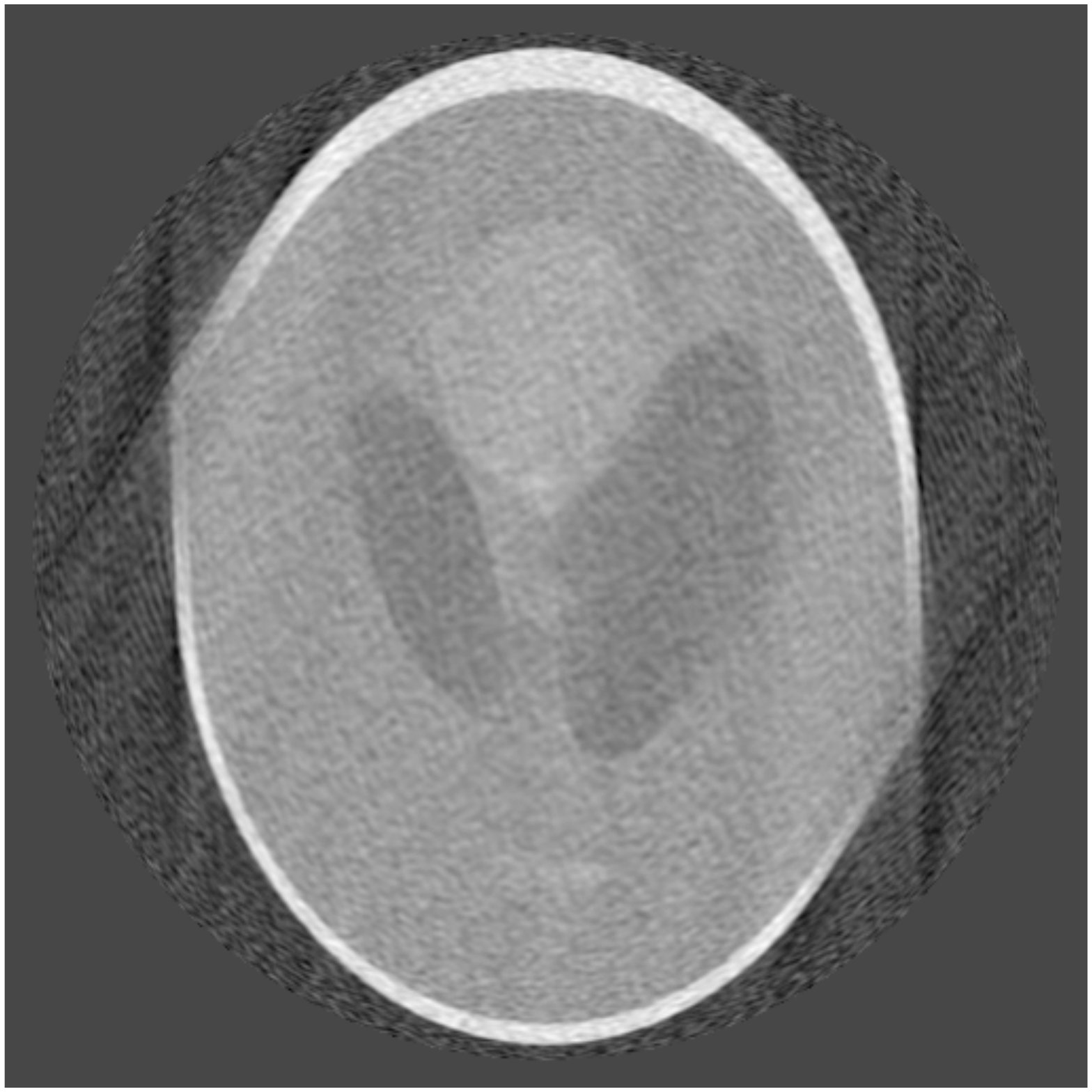} }
\caption{Noise data. Left: $r=21$. Right: $r=42$} \label{noise}
\end{figure}

These reconstruction should be compared with the left image in Figure 3 and
the image in Figure 4, respectively, which are the reconstructed images based
on the same limited angle data but without noise. These images indicate  
that our method is relatively stable, in the sense that the reconstructed 
images are not distorted much by the noise.

\subsection{Discussion}
The theoretic study and the numerical experiments point out that the 
proposed algorithm depends critically on the choice of $\tau$. The
matrices remain relatively well conditioned for $\tau =0$ even when 
$r$ is large, but the case $\tau =0$ introduces distortion in the images,
in addition to the artifacts. The reconstruction with $\tau > 0$ appears to
lead to less distortion in the images.  However, the maximum of the 
condition numbers appears to grow exponentially with $r$ for $\tau > 0$ 
and it increases drastically still for larger $\tau$. The ill-postedness of the
matrices likely reflects the ill-posed nature of the limited angle problem. 
It is likely that solving the linear systems with pre-conditioning algorithms 
may improve the reconstructed images. This is, however, beyond
the scope of the present paper. 

\section{Conclusion}

A method for reconstruction images in the limited angle problem is
presented and a theoretic study is carried out. The ill-posed nature of the 
problem shows up, when $\tau$ is not zero, in the ill-condition of the linear 
systems of equations that we need to solve. Numerical tests have 
demonstrated the feasibility of the method. 

In order to fully understand the proposed method, further numerical study 
needs to be carried out. One interesting question is how much of the artifacts 
and the distortions are due to the ill-conditioning of the matrices when $\tau$
is not too small. The theoretic study indicates that the algorithm should be
applied with $\tau$ relatively large if the severely ill-conditioned systems 
can be solved. On the other hand, as the limited angle problem is intrinsically
ill-posed, there will have to be distortion of images when the angle is small.

\end{document}